\newtheorem{defnfoo}{Definition}[section]
\newenvironment{definition}{\begin{defnfoo}\rm}{\end{defnfoo}}
\newtheorem{notefoo}[defnfoo]{Notation}
\newtheorem{theorem}[defnfoo]{Theorem}
\newtheorem{proposition}[defnfoo]{Proposition}
\newtheorem{lemma}[defnfoo]{Lemma}
\newtheorem{corollary}[defnfoo]{Corollary}
\newtheorem{exfoo}[defnfoo]{Exercise}
\newtheorem{conjecture}[defnfoo]{Conjecture}
\newcommand{\floor}[1]{\ensuremath{\left \lfloor {#1} \right \rfloor}}
\newcommand{\ceil}[1]{\ensuremath{\left \lceil {#1} \right \rceil}}
\newcommand{\eps}{\varepsilon}
\newcommand{\N}{{\mathbb N}}
\newcommand{\Z}{{\mathbb Z}}
\newcommand{\R}{{\mathbb R}}
\newcommand{\1}{{\mathbbmss 1}}
\newcommand{\posints}{{\mathbb Z}^+}
\newcommand{\bP}{{\bf P}}
\newcommand{\bv}{{\bf v}}
\newcommand{\cO}{{\mathcal O}}
\begin{document}

\bibliographystyle{plain}

\title{Monochromatic Boxes in Colored Grids}
\author{Joshua Cooper, Stephen Fenner\thanks{Partially supported by NSF grant CCF-05-15269}, and Semmy Purewal}

\maketitle

\begin{abstract}
A $d$-dimensional {\it grid} is a set of the form $R = [a_1] \times \cdots \times [a_d]$.  A $d$-dimensional {\it box} is a set of the form $\{b_1,c_1\} \times \cdots \times \{b_d,c_d\}$.  When a grid is $c$-colored, must it admit a monochromatic box?  If so, we say that $R$ is $c$-guaranteed.  This question is a relaxation of one attack on bounding the van der Waerden numbers, and also arises as a natural hypergraph Ramsey problem (viz. the Ramsey numbers of hyperoctahedra).  We give conditions on the $a_i$ for $R$ to be $c$-guaranteed that are asymptotically tight, and analyze the set of minimally $c$-guaranteed grids.
\end{abstract}

\section{Introduction}
A $d$-dimensional {\it grid} is a set $R = [a_1] \times \cdots \times [a_d]$, where $[t] = \{1,\ldots,t\}$.  For ease of notation, we write $[a_1,\ldots,a_d]$ for $[a_1] \times \cdots \times [a_d]$.  The ``volume'' of $R$ is $\prod_{i=1}^d a_i$.  A $d$-dimensional {\it box} is a set of $2^d$ points of the form
$$
\{(x_1+\epsilon_1 s_1,\ldots,x_d+\epsilon_d s_d) \, |\, \epsilon_i \in \{0,1\} \textrm{ for } 1 \leq i \leq d\},
$$
with $s_i \neq 0$ for all $1 \leq i \leq d$. A grid $R$ is {\it $(c,t)$-guaranteed}, if for all colorings $f : R \rightarrow [c]$, there are at least $t$ distinct monochromatic boxes in $R$, i.e., boxes $B_j \subseteq R$, $j \in [t]$, so that $|f(B_j)| = 1$.  When $t=1$, we simply say that $R$ is $c$-guaranteed.  If $R$ is not $c$-guaranteed, we say it is $c$-{\it colorable}. Clearly, whether a grid is $(c,t)$-guaranteed depends only on $a_1,\ldots,a_d$.  Furthermore, if $b_i \geq a_i$ for all $i$ such that $1 \leq i \leq d$, then $[b_1,\ldots,b_d]$ is $c$-guaranteed if $[a_1,\ldots,a_d]$ is.  This ordering on $d$-tuples is sometimes called the {\it dominance order}, and we will denote it by $\preceq$.  Then one may state the above observation as the fact that the set of $c$-{\it guaranteed} grids is an up-set in the $(\N^d,\preceq)$-poset. Hence, we have a full understanding of this family if we know the minimal $c$-guaranteed grids, an antichain in the $\preceq$ order.  (Note that any such antichain is finite, a well-known fact in poset theory.)  Call the set of minimal $c$-guaranteed grids $\cO(c,d)$, the {\it obstruction set} for $c$ colors in dimension $d$.  We will focus our attention on {\it monotone} obstruction set elements, i.e., those grids for which $a_1 \leq \cdots \leq a_d$, since being $c$-guaranteed (or $(c,t)$-guaranteed) is invariant under permutations of the $a_j$.

The subject of unavoidable configurations in grids has connections with the celebrated Van der Waerden's and Szemer\'edi's Theorems.  (See, for example, \cite{AM}, \cite{FK}, and \cite{S}.)  Our results can be seen as belonging to hypergraph Ramsey theory, as follows.  Let $G$ be the complete $d$-partite $d$-uniform hypergraph with blocks of size $a_1,\ldots,a_d$.  Then an edge of $G$ can be identified with a vertex of $R = [a_1,\ldots,a_d]$ in the natural way.  Under this correspondence, a $c$-coloring of $R$ gives rise to a $c$-edge coloring of $G$, and boxes correspond precisely to subgraphs isomorphic to the ``generalized octahedron'' $K_d(2)$, the complete $d$-partite $d$-uniform hypergraph with each block of size $2$.  The generalized octahedra play an important and closely related role in the work of Kohayakawa, R\"odl, and Skokan (\cite{KRS}) on hypergraph quasirandomness.  (Among other interesting results, they show that, asymptotically, a random $c$-edge coloring of $G$ has the fewest number of monochromatic $K_d(2)$'s possible.)  We may translate each of our results into statements about the Ramsey numbers of hyperoctahedra-free $d$-partite $d$-uniform graphs.  For example, in Section \ref{sec:threeD}, we give a family of upper bounds on the sizes of $3$-dimensional grids which have a $2$-coloring admitting no monochromatic box; this is equivalent to asking for the extremal tripartite $3$-uniform hypergraphs which are $(K_3(2),K_3(2))$-Ramsey.

The present work is even more closely connected to the ``Product Ramsey Theorem.''  Though the proof appears in \cite{GRS}, the statement appearing in \cite{T} best illustrates the connection:

\begin{theorem}[Product Ramsey Theorem] Let $k_1,\ldots,k_d$ be nonnegative integers; let $c$ and $d$ be positive integers; and let $m_1,\ldots,m_d$ be integers with $m_i \geq k_i$ for $i \in [d]$.  Then there exists an integer $R = R(c,d;k_1,\ldots,k_d;m_1,\ldots,m_d)$ so that if $X_1,\ldots,X_d$ are sets and $|X_i| \geq R$ for $i \in [d]$, then for every function $f : \binom{X_1}{k_1} \times \cdots \times \binom{X_d}{k_d} \rightarrow [c]$, there exists an element $\alpha \in [c]$ and subsets $Y_1,\ldots,Y_d$ of $X_1,\ldots,X_d$, respectively, so that $|Y_i| \geq m_i$ for $i \in [d]$ and $f$ maps every element of $\binom{X_1}{k_1} \times \cdots \times \binom{X_d}{k_d}$ to $\alpha$.
\end{theorem}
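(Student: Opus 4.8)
The plan is to induct on the dimension $d$, using the ordinary (finite, multicolor) hypergraph Ramsey theorem both as the base case and as the engine of each inductive step. For $d=1$ the statement is exactly Ramsey's theorem: we set $R(c,1;k_1;m_1)$ equal to the $c$-color Ramsey number guaranteeing a monochromatic $k_1$-clique on $m_1$ vertices, and a monochromatic $Y_1$ is precisely a set of size $m_1$ all of whose $k_1$-subsets receive the same color. (The degenerate cases $k_i = 0$, where $\binom{X_i}{k_i}$ is a single point and the coordinate is vacuous, are trivial and may be absorbed into the induction.)

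For the inductive step, assume the theorem for dimension $d-1$ and write $R^\ast = R(c,d-1;k_1,\ldots,k_{d-1};m_1,\ldots,m_{d-1})$. Since enlarging any $X_i$ can only help — one simply restricts $f$ to the $k_i$-subsets of an arbitrary subset of the prescribed size — it suffices to pin down explicit sizes and produce a single threshold $R$ at the end. First fix $|X_1| = \cdots = |X_{d-1}| = R^\ast$. The key device is a \emph{product coloring} that compresses the dependence of $f$ on the first $d-1$ coordinates into a single color attached to the last coordinate. Concretely, set $P = \prod_{i<d}\binom{R^\ast}{k_i}$ and define $F : \binom{X_d}{k_d} \to [c]^{P}$ by letting $F(A_d)$ record the tuple $\big(f(A_1,\ldots,A_{d-1},A_d)\big)$ as $(A_1,\ldots,A_{d-1})$ ranges over $\prod_{i<d}\binom{X_i}{k_i}$. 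Because the $X_i$ with $i<d$ have fixed finite size, $F$ uses only $C := c^{P}$ colors.

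Now apply the one-dimensional multicolor Ramsey theorem to $F$: choosing $|X_d|$ at least the $C$-color Ramsey number for a monochromatic $k_d$-clique on $m_d$ vertices yields a set $Y_d \subseteq X_d$ with $|Y_d| = m_d$ on which $F$ is constant. By construction this means $f(A_1,\ldots,A_{d-1},A_d)$ is independent of $A_d \in \binom{Y_d}{k_d}$, so $f$ descends to a well-defined coloring $f' : \prod_{i<d}\binom{X_i}{k_i} \to [c]$. As each $X_i$ ($i<d$) has size exactly $R^\ast$, the induction hypothesis applied to $f'$ returns a color $\alpha$ and sets $Y_1,\ldots,Y_{d-1}$ with $|Y_i| = m_i$ on which $f'$ is constantly $\alpha$. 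Unwinding the definitions, $f$ is then constantly $\alpha$ on $\binom{Y_1}{k_1}\times\cdots\times\binom{Y_d}{k_d}$ — the asserted conclusion — and setting $R$ to be the maximum of $R^\ast$ and the invoked Ramsey number completes the step.

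The routine part is the bookkeeping of the resulting (tower-type) bound on $R$; the one genuine idea is the product coloring, and the point that must be handled with care is \emph{why its palette is finite}. This forces the particular order of quantification: the sizes of $X_1,\ldots,X_{d-1}$ must be pinned down \emph{before} the last coordinate is colored, so that the number $C = c^{P}$ of compressed colors is a finite constant to which ordinary Ramsey applies. Getting this order right — freeze the low coordinates at the $(d-1)$-dimensional Ramsey number, then blow up the top coordinate accordingly — is the crux of the argument.
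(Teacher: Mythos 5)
The paper does not actually prove this theorem: it is quoted as background material, with the proof explicitly attributed to \cite{GRS}, so there is no internal proof to compare against. Your argument is correct, and it is essentially the standard proof (the one in the cited reference): induction on dimension, where the inductive step freezes the first $d-1$ ground sets at size $R^\ast = R(c,d-1;k_1,\ldots,k_{d-1};m_1,\ldots,m_{d-1})$, compresses $f$ into a $C$-coloring of $\binom{X_d}{k_d}$ with $C = c^{P}$ and $P = \prod_{i<d}\binom{R^\ast}{k_i}$, applies the one-dimensional multicolor hypergraph Ramsey theorem to find $Y_d$ on which the last coordinate is irrelevant, and only then invokes the inductive hypothesis on the induced coloring $f'$. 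You correctly isolate the one point requiring care --- the compressed palette must be finite, which forces the order ``freeze the low coordinates first, then blow up the last one'' --- and your handling of the degenerate cases $k_i=0$ and of oversized ground sets (by restriction to subsets of the prescribed size, which is legitimate since any $Y_i$ found inside the restriction sits inside the original $X_i$) is sound. One cosmetic remark: the statement as printed in the paper contains a typo (``$f$ maps every element of $\binom{X_1}{k_1}\times\cdots\times\binom{X_d}{k_d}$ to $\alpha$'' should read $\binom{Y_1}{k_1}\times\cdots\times\binom{Y_d}{k_d}$); your proof establishes the intended, corrected statement.
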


This result ensures that the quantity $N(c,d) = R(c,d;1^d;2^d)$, which corresponds to the least $R$ so that $[R]^d$ is $c$-guaranteed, is finite.  A closer analysis of $N(c,d)$ -- in fact, the more general $N(c,d,m) = R(c,d;1^d;m^d)$ -- appears in the manuscript \cite{ADS} by Agnarsson, Doerr, and Schoen.  They obtain asymptotic bounds on $N(c,d,m)$ that are valid for large $m$.  Here, we examine instead the least nontrivial case of $m=2$, and consider grids which are not necessarily equilateral.

In the next section, we show that any grid of sufficiently small volume (approximately $c^{2^d-1}$) is $c$-colorable.  The following section shows that the analysis is tight: there are grids of this volume which are $c$-guaranteed.  Not all grids of sufficient volume are $c$-guar\-an\-teed, although Section \ref{sec:hereditary} demonstrates that any grid all of whose lower-dimensional subgrids are sufficiently voluminous is indeed $c$-guar\-an\-teed.  The next section gives a tight upper bound on the volume of minimally $c$-guaranteed grids, i.e., elements of the obstruction set.  Section \ref{sec:Ocdsize} then addresses the question of how many obstructions there are.  Finally, as mentioned above, Section \ref{sec:threeD} considers the case of $c=2$ and $d=3$, where some interesting computational questions arise.  This extends work of the second two authors (\cite{FGGP}) for $d=2$ and $2 \leq c \leq 4$.

Throughout the present manuscript, unless we explicitly say otherwise, we use the notations $x = O(y)$ and $y = \Omega(x)$ to mean that there is a function $F : \posints \rightarrow \posints$ such that $x \le F(d) y$.  That is, $x$ is bounded by $y$ times a number that only depends on $d$.  (Naturally, $x = \Theta(y)$ means that $x = O(y)$ and $x = \Omega(y)$, and notation $x=o(y)$ is defined analogously.)  In general, $x$ and $y$ will depend on $c$, $d$, and perhaps other quantities.

\section{All small grids are $c$-colorable}

Define $V(c,d)$ to be the largest integer $V$ so that every $d$-dimensional grid $R$ with volume at most $V$ is $c$-colorable.  Below, we show that $V(c,d)$ is $\Theta(c^{2^d-1})$.

\begin{theorem} $$V(c,d) = \Omega(c^{{2}^d-1}).$$
In fact, $V(c,d) > c^{2^d-1}/e2^d$, where $e = 2.718\ldots$ is Napier's constant.
\end{theorem}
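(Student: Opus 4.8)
The plan is to prove the (formally stronger) statement that every grid $R = [a_1,\ldots,a_d]$ with $\prod_{i=1}^d (a_i-1) \le c^{2^d-1}/(e2^d)$ is $c$-colorable, by applying the Lov\'asz Local Lemma to a uniformly random coloring. Grids with some $a_i = 1$ contain no box at all and are trivially colorable, so assume $a_i \ge 2$ throughout. Color each point of $R$ independently and uniformly at random from $[c]$, and for each box $B \subseteq R$ let $A_B$ be the event that $B$ is monochromatic. Since $B$ consists of exactly $2^d$ points, $\pr[A_B] = c \cdot c^{-2^d} = c^{1-2^d}$.

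The first real step is to pin down the dependency structure. The event $A_B$ is a function only of the colors assigned to the $2^d$ points of $B$, so $A_B$ is mutually independent of the entire family $\{A_{B'} : B' \cap B = \emptyset\}$. Hence the dependency degree $D$ is at most the number of boxes $B' \neq B$ that meet $B$. A box through a fixed point $v$ is obtained by choosing, in each coordinate $i$, the second value of its pair among the $a_i - 1$ values other than $v_i$, so exactly $\prod_{i=1}^d(a_i-1)$ boxes pass through $v$. Taking the union bound over the $2^d$ points of $B$ gives $D + 1 \le 2^d \prod_{i=1}^d(a_i-1)$.

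Next I would invoke the symmetric form of the Local Lemma: if $e\,\pr[A_B]\,(D+1) \le 1$, then $\pr\!\big[\bigcap_B \overline{A_B}\big] > 0$, and any coloring witnessing this event is a box-free $c$-coloring. Substituting the two estimates yields
\[
e\,\pr[A_B]\,(D+1) \;\le\; e\, c^{1-2^d}\, 2^d \prod_{i=1}^d(a_i-1),
\]
which is at most $1$ precisely when $\prod_{i=1}^d(a_i-1) \le c^{2^d-1}/(e2^d)$. Because $\prod_{i=1}^d(a_i-1) < \prod_{i=1}^d a_i = \vol(R)$ whenever all $a_i \ge 2$, every grid of volume at most $c^{2^d-1}/(e2^d)$ satisfies the hypothesis with strict slack, hence is $c$-colorable. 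It is exactly this strictness, inherited from $\prod_i(a_i-1) < \vol(R)$, that upgrades the bound from $V(c,d) \ge c^{2^d-1}/(e2^d)$ to the claimed $V(c,d) > c^{2^d-1}/(e2^d)$.

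The one genuinely delicate point is the dependency count: the constant $e2^d$ in the statement is produced exactly by pairing the bound $D+1 \le 2^d\prod_i(a_i-1)$ with $\pr[A_B] = c^{1-2^d}$, so any looser estimate of how many boxes meet a given box (for instance, counting through the full volume rather than through $\prod_i(a_i-1)$, or handling the $2^d$ shared vertices carelessly) would degrade the constant. The bulk of the work is therefore to confirm that the naive vertex-by-vertex union bound is both correct and sufficient, and that no finer independence claim than ``disjoint boxes are independent'' is needed.
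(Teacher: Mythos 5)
Your proposal is correct and follows essentially the same route as the paper: a uniformly random $c$-coloring, the symmetric Lov\'asz Local Lemma with $p = c^{1-2^d}$, and the dependency bound $D+1 \le 2^d\prod_{i=1}^d(a_i-1)$ obtained by choosing a shared corner of the box and then its antipodal corner. Your explicit use of the gap between $\prod_i(a_i-1)$ and the volume $\prod_i a_i$ to secure the strict inequality $V(c,d) > c^{2^d-1}/e2^d$ is the same slack the paper's proof exploits.
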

\begin{proof} We apply the Lov\'asz Local Lemma (see, e.g., \cite{AS}), which states the following.  Suppose that $A_1,\ldots,A_t$ are events in some probability space, each of probability at most $p$.  Let $G$ be a ``dependency'' graph with vertex set $\{A_i\}_{i=1}^t$, i.e., a graph so that, whenever a set $S$ of vertices induces no edges in $G$, then $S$ is a mutually independent family of events.  Then $\bP(\bigwedge_{i=1}^t \overline{A_i}) > 0$ if $ep(\Delta + 1) \leq 1$, where $\Delta = \Delta(G)$ is the maximum degree of $G$.

Now, suppose $R=[a_1,\ldots,a_d]$ is a grid of volume $V$, and we color the points of $R$ uniformly at random from $[c]$.  Enumerate all boxes in $R$ as $B_1,\ldots,B_t$.  Define $A_i$ to be the event that $B_t$ is monochromatic in this random coloring. Clearly, we may take $G$ to have an edge between $A_i$ and $A_j$ whenever $B_i \cap B_j \neq \emptyset$.  The degree of a vertex $A_i$ is then the number of boxes $B_j$, $j \neq i$, which intersect $B_i$.  Since we may specify the list of all such boxes by choosing one of the $2^d$ points of $B_i$, and then choosing the $d$ coordinates of its antipodal point, $\deg_G(A_i)$ is at most
$$
2^d \prod_{i=1}^d (a_i - 1) - 1 < 2^d \prod_{i=1}^d a_i - 1 = 2^d V - 1.
$$
(The outermost $-1$ here reflects the fact that $B_i$ may be excluded among these choices.)  The probability of each $A_i$ is the same: $p = c^{-2^d+1}$.  Therefore,
$$
ep(\Delta+1) < e c^{-2^d + 1} 2^d V
$$
which is $\leq 1$ whenever $V \leq c^{2^d - 1}/e 2^d$.
\end{proof}

\section{Some large grids are $c$-guaranteed}

\begin{theorem} \label{thm:CSrawupperbound} Fix $c, d$, define $R = [a_1,\ldots,a_d]$, and let $M = \prod_i \binom{a_i}{2}$ denote the total number of boxes in $R$.  For $\min\{a_1,\ldots,a_d\} \rightarrow \infty$, $R$ is $(c,M(1+o(1))/c^{2^d-1})$-guaranteed.
\end{theorem}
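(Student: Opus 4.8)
The plan is to show that the uniformly random coloring of the previous section is asymptotically the \emph{worst} case: every coloring must create at least as many monochromatic boxes as a random coloring does in expectation, namely $M$ boxes each monochromatic with probability $c^{-(2^d-1)}$. I would fix a coloring $\map{f}{R}{[c]}$ and argue one color class at a time. For $\alpha \in [c]$ let $g_\alpha : R \to \{0,1\}$ be the indicator of $f^{-1}(\alpha)$ and put $\mu_\alpha = |f^{-1}(\alpha)|/\prod_i a_i$, so that $\sum_\alpha \mu_\alpha = 1$. A box is monochromatic in color $\alpha$ exactly when $g_\alpha = 1$ on all $2^d$ of its vertices, i.e.\ when the product of $g_\alpha$ over those vertices equals $1$; writing $N_\alpha$ for the number of such boxes, the target is a lower bound on $N = \sum_\alpha N_\alpha$.

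The main device is the ordered ``octahedron count''
$$
T_\alpha \;=\; \sum_{x_1^0,x_1^1 \in [a_1]} \cdots \sum_{x_d^0,x_d^1 \in [a_d]} \ \prod_{\eps \in \{0,1\}^d} g_\alpha\bigl(x_1^{\eps_1},\ldots,x_d^{\eps_d}\bigr).
$$
Each genuine box is counted $2^d$ times here (one ordering of its two values per coordinate, all giving the same product), so the nondegenerate part of $T_\alpha$ is exactly $2^d N_\alpha$. The degenerate tuples, those with $x_i^0 = x_i^1$ for some $i$, contribute a nonnegative amount at most equal to their number $\prod_i a_i^2 - \prod_i a_i(a_i-1) = O\bigl(\prod_i a_i^2 \cdot \sum_i a_i^{-1}\bigr) = o\bigl(\prod_i a_i^2\bigr)$, since $\min_i a_i \to \infty$ with $d$ fixed. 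Hence $2^d N_\alpha \ge T_\alpha - o\bigl(\prod_i a_i^2\bigr)$.

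The heart of the matter is the lower bound $T_\alpha \ge \mu_\alpha^{2^d}\prod_i a_i^2$ --- equivalently, that the normalized count $T_\alpha/\prod_i a_i^2$ is at least $\mu_\alpha^{2^d}$. This is the Gowers--Cauchy--Schwarz inequality for the box (cut) norm, of the sort underlying the quasirandomness results of \cite{KRS}, and I would prove it by peeling off one coordinate at a time. Fixing $(x_d^0,x_d^1)$ splits $\prod_\eps g_\alpha$ into its $\eps_d=0$ and $\eps_d=1$ halves, which share the first $d-1$ coordinate variables; summing each half over its last-coordinate variable already displays $T_\alpha$ as a sum of squares $\sum Q^2$ over the first $d-1$ coordinate pairs. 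One application of Cauchy--Schwarz gives $T_\alpha \ge (\sum Q)^2/\prod_{i<d} a_i^2$, where $\sum Q$ is a sum of $(d-1)$-dimensional octahedron counts; the inductive hypothesis together with a convexity (power-mean) step on the slice sizes then lowers both the exponent and the dimension, and iterating collapses the bound to $\mu_\alpha^{2^d}$. This iterated Cauchy--Schwarz, and the bookkeeping of the lower-order terms, is the step I expect to demand the most care.

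Summing over colors gives
$$
N = \sum_\alpha N_\alpha \;\ge\; \frac{\prod_i a_i^2}{2^d}\Bigl(\sum_\alpha \mu_\alpha^{2^d} - o(1)\Bigr),
$$
and since $\sum_\alpha \mu_\alpha = 1$, Jensen's inequality for $t \mapsto t^{2^d}$ yields $\sum_\alpha \mu_\alpha^{2^d} \ge c \cdot c^{-2^d} = c^{-(2^d-1)}$, with equality for the balanced coloring. Finally $\binom{a_i}{2} = \tfrac12 a_i^2(1 - a_i^{-1})$ gives $\prod_i a_i^2/2^d = M/\prod_i(1 - a_i^{-1}) = M(1+o(1))$; substituting and absorbing the constant factor $c^{2^d-1}$ into the error shows $N \ge M(1+o(1))/c^{2^d-1}$, as claimed.
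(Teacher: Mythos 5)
Your proof is correct, and it reaches the theorem by a genuinely different route than the paper, even though both arguments are at bottom iterated Cauchy--Schwarz inductions on the dimension. The paper proves a sharper finite statement (Lemma~\ref{lem:box-bound}): working with \emph{unordered} monochromatic box counts, it slices along the last coordinate, lets $\gamma_i(B)$ be the number of layers that monocolor a fixed lower-dimensional box $B$ with color $i$, and applies Cauchy--Schwarz to $\sum_{i,B}\binom{\gamma_i(B)}{2}$, with the sum over colors and boxes handled jointly at every level; the losses from the $\binom{\gamma}{2}$-versus-$\gamma^2/2$ corrections are tracked exactly through the recurrence for $\Delta_j$, and the theorem falls out because $\Delta_j = 1+o(1)$ as $\min_i a_i \to \infty$. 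You instead separate the colors, replace unordered box counts by the \emph{ordered} octahedron count $T_\alpha$, prove the lossless Gowers--Cauchy--Schwarz bound $T_\alpha \ge \mu_\alpha^{2^d}\prod_i a_i^2$ (your sketch of the slicing, Cauchy--Schwarz, and power-mean induction is the standard correct argument), discard the degenerate tuples in a single $o\bigl(\prod_i a_i^2\bigr)$ estimate, and finish with one application of Jensen over the colors; all the steps check out, including the factor-$2^d$ counting of ordered tuples per box and the final absorption of constants (legitimate since $c$ and $d$ are fixed). What your route buys: the inequality at the heart of the induction is exact, all loss is confined to one degenerate-tuple count made once at the end, and the argument sits naturally in the quasirandomness framework of \cite{KRS}. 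What the paper's route buys: an explicit, non-asymptotic criterion ($\Delta_1,\ldots,\Delta_d>0$, with the precise guaranteed count $M\Delta_d/c^{2^d-1}$), which your asymptotic argument does not produce and which the rest of the paper leans on heavily --- it is the source of the $\Gamma_j$ and $\eps_j$ recurrences, of the key colorability criterion $\varepsilon_c(R)\ge 1$ (Lemma~\ref{lem:key-fact}), of Corollary~\ref{cor:box-bound}, and of the computational bounds in Section~\ref{sec:threeD}. So your proof establishes Theorem~\ref{thm:CSrawupperbound} on its own, but it would not substitute for Lemma~\ref{lem:box-bound} in the paper's overall architecture.
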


Theorem~\ref{thm:CSrawupperbound} follows quickly from the next lemma, whose extra strength we will need later.

\begin{lemma}\label{lem:box-bound}
Suppose $c\geq 1$.  For $d\geq 1$ and integers $a_1,\ldots,a_d \geq 2$, let $M = \prod_{i=1}^d \binom{a_i}{2}$.  The grid $R=[a_1,\ldots,a_d]$ is $(c,M\Delta_d/c^{2^d-1})$-guaranteed provided $\Delta_1,\ldots,\Delta_d > 0$, where $\Delta_j$, $0 \leq j \leq d$, is given by the recurrence
\begin{eqnarray*}
\Delta_0 & = & 1, \\
\Delta_{j} & = & \Delta_{j-1}^2\left(1 - \frac{\frac{c^{2^{j-1}}}{\Delta_{j-1}} - 1}{a_{j} - 1}\right).
\end{eqnarray*}
\end{lemma}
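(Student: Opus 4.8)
The plan is to prove the lemma by induction on the dimension $d$, peeling off the last coordinate at each step and combining the inductive bound with two nested applications of convexity (Cauchy--Schwarz). The base case $d=0$ is trivial: a $0$-dimensional grid is a single point, with $M=\Delta_0=1$, and the point is itself a monochromatic ($0$-dimensional) box, so $R$ is $(c,1)$-guaranteed exactly as claimed. (As a sanity check, the case $d=1$ reduces to the classical fact that $\sum_\alpha \binom{n_\alpha}{2}$ is minimized over $c$-colorings by balanced color classes, and this minimum is precisely $\binom{a_1}{2}\Delta_1/c$ with $\Delta_1=(a_1-c)/(a_1-1)$.)

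For the inductive step, fix $d\geq 1$ and view $R=[a_1,\ldots,a_d]$ as a stack of $a_d$ layers, each a copy of $R'=[a_1,\ldots,a_{d-1}]$. Every box of $R$ factors uniquely as $B'\times\{y,y'\}$, where $B'$ is a box of $R'$ and $\{y,y'\}$ is a pair of distinct layers; such a box is monochromatic precisely when $B'$ is monochromatic and receives the same color $\alpha$ in both layers $y$ and $y'$. Writing $m_\alpha(B')$ for the number of layers in which $B'$ is monochromatic of color $\alpha$ and $m(B')=\sum_\alpha m_\alpha(B')$, the number of monochromatic boxes over a given base $B'$ is $\sum_\alpha\binom{m_\alpha(B')}{2}$, so the total count in $R$ is $\sum_{B'}\sum_\alpha\binom{m_\alpha(B')}{2}$, where $B'$ ranges over the $M_{d-1}:=\prod_{i<d}\binom{a_i}{2}$ boxes of $R'$.

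The strategy from here is to bound this double sum below by a single convex function of the scalar $S:=\sum_{B'}m(B')$. First, for each fixed $B'$, convexity of $\binom{x}{2}$ across the $c$ colors gives $\sum_\alpha\binom{m_\alpha(B')}{2}\geq m(B')^2/(2c)-m(B')/2$; summing over $B'$ and then applying Cauchy--Schwarz across the $M_{d-1}$ bases yields the lower bound $g(S):=S^2/(2cM_{d-1})-S/2$. The key observation is that $S=\sum_{y}(\text{number of monochromatic boxes of }R'\text{ in layer }y)$, so the inductive hypothesis (applicable because $\Delta_1,\ldots,\Delta_{d-1}>0$) bounds each layer's contribution below by $\mu:=M_{d-1}\Delta_{d-1}/c^{2^{d-1}-1}$, giving $S\geq a_d\mu$. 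Substituting $S=a_d\mu$ into $g$ and simplifying with the recurrence together with $M_d=M_{d-1}\binom{a_d}{2}$ should reproduce $M_d\Delta_d/c^{2^d-1}$ on the nose, so the recurrence is tight rather than merely sufficient.

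The one step requiring care — and the place where the hypothesis $\Delta_d>0$ is consumed — is the substitution $g(S)\geq g(a_d\mu)$. Since $g$ is an upward parabola, this is valid only when $a_d\mu$ lies to the right of its vertex $cM_{d-1}/2$, i.e. when $a_d\Delta_{d-1}\geq c^{2^{d-1}}/2$. I expect this monotonicity check to be the main (if modest) obstacle: one verifies that, given $\Delta_{d-1}>0$ and $a_d>1$, the condition $\Delta_d>0$ is equivalent to the strictly stronger inequality $a_d\Delta_{d-1}>c^{2^{d-1}}$, so the positivity assumption guarantees $g$ is increasing on $[a_d\mu,\infty)$ and the substitution goes through. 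Everything else is bookkeeping with the exponents of $c$.
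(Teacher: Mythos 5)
Your proposal is correct and takes essentially the same route as the paper's proof: induction on dimension, decomposition of $R$ into $a_d$ layers over the boxes of $[a_1,\ldots,a_{d-1}]$, a Cauchy--Schwarz/convexity lower bound of the form $g(S)=S^2/(2cM_{d-1})-S/2$, and substitution of the inductive lower bound $S\ge a_d\mu$, with the algebra collapsing exactly to $M\Delta_d/c^{2^d-1}$. Your explicit check that $\Delta_d>0$ is equivalent to $a_d\Delta_{d-1}>c^{2^{d-1}}$, which places the substitution point beyond the vertex of the parabola so that $g(S)\ge g(a_d\mu)$ is legitimate, makes rigorous a monotonicity step that the paper performs silently.
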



\begin{proof}
We proceed inductively.  Suppose $d=1$, let $f : [a_1] \rightarrow [c]$ be a $c$-coloring, and define
$$
\gamma_i = |f^{-1}(i)|
$$
to be the number of points colored $i$, $1 \leq i \leq c$.  Then the number $N$ of monochromatic boxes in $f$ is exactly
$$
N=\sum_{i=1}^c \binom{\gamma_i}{2} = \frac{1}{2} \cdot \sum_{i=1}^c \left( \gamma_i^2 - \gamma_i \right ) = \frac{1}{2} \cdot \left ( \sum_{i=1}^c \gamma_i^2 - a_1 \right ).
$$
Applying Cauchy-Schwarz,
\begin{align*}
N & \geq  \frac{\left ( \sum_{i=1}^c \gamma_i \right )^2}{2c} - \frac{a_1}{2} = \frac{a_1^2}{2c} - \frac{a_1}{2} \\
& = \frac{a_1 (a_1 - c)}{2c} = \frac{1}{c}\binom{a_1}{2}\frac{a_1-c}{a_1-1} = \frac{1}{c}\binom{a_1}{2}\Delta_1.
\end{align*}
Now, suppose the statement is true for dimensions $< d+1$, and consider a coloring $f : [a_1,\ldots,a_{d+1}] \rightarrow [c]$.  Consider the $a_{d+1}$ colorings $f_j$ of the $d$-dimensional grid $[a_1,\ldots,a_{d}]$ induced by setting the last coordinate to $j$, i.e.,
$$
f_j (x_1,\ldots,x_{d}) = f(x_1,\ldots,x_{d},j).
$$
Let $\gamma_i(B)$, for a box $B \subset [a_1,\ldots,a_{d}]$ and $i \in [c]$, denote the number of $j$ so that $f_j|_B \equiv i$.  Then the number $N$ of monochromatic $(d+1)$-dimensional boxes in $f$ is
\begin{align*}
N & = \sum_i \sum_{B} \binom{\gamma_i(B)}{2} \\
& = \frac{1}{2} \cdot \sum_i \sum_{B} (\gamma_i(B)^2 - \gamma_i(B)) \\
& \geq \frac{\left ( \sum_{B} \sum_i \gamma_i(B) \right )^2}{2Mc} - \frac{1}{2} \cdot \sum_{B} \sum_i \gamma_i(B) \\
& = \frac{\left ( \sum_{B} \sum_i \gamma_i(B) \right )^2 - Mc \sum_{B} \sum_i \gamma_i(B)}{2Mc}
\end{align*}
where $M = \prod_{i=1}^{d} \binom{a_i}{2}$. Since, by the inductive hypothesis, $f_j$ induces at least $M \Delta_d/c^{2^{d}-1}$ monochromatic boxes,
$$
\sum_i \sum_B \gamma_i(B) \geq \frac{a_{d+1} M\Delta_d}{c^{2^{d}-1}},
$$
so that
\begin{align*}
N & \geq \frac{a_{d+1}^2 M^2\Delta_d^2/c^{2^{d+1}-2} - a_{d+1} M^2 \Delta_d c/c^{2^{d}-1}}{2Mc}\\
& = \frac{a_{d+1} M (a_{d+1}\Delta_d^2 - c^{2^{d}}\Delta_d)}{2 c^{2^{d+1}-1}} \\
& = \frac{M}{c^{2^{d+1}-1}} \binom{a_{d+1}}{2}\frac{a_{d+1}\Delta_d^2 - c^{2^d}\Delta_d}{a_{d+1} - 1} \\
& = \frac{\prod_{i=1}^{d+1}\binom{a_i}{2}}{c^{2^{d+1}-1}}\cdot \Delta_d^2\left(\frac{a_{d+1} - c^{2^d}/\Delta_d}{a_{d+1}-1}\right) \\
& = \frac{\prod_{i=1}^{d+1}\binom{a_i}{2}\Delta_{d+1}}{c^{2^{d+1}-1}}.
\end{align*}
\end{proof}

\begin{proof}[Proof of Theorem~\ref{thm:CSrawupperbound}]
Fix $c,d\ge 1$.  It is clear by induction on $j$ that for
all $1\le j\le d$, as $\min\{a_1,\ldots,a_d\} \rightarrow \infty$, \
$\Delta_j = 1+o(1)$, and so in particular, $\Delta_j > 0$ if
$\min\{a_1,\ldots,a_d\}$ is large enough.
\end{proof}

Note that, in the notation of Lemma~\ref{lem:box-bound}, if $\Delta_1,\ldots,\Delta_d > 0$, then $[a_1,\ldots,a_d]$ is not $c$-colorable.  Therefore we may conclude the following.

\begin{corollary}
In the notation of Lemma~\ref{lem:box-bound}, let $\Gamma_j$, $0 \leq j \leq d$, be given by the recurrence
\begin{eqnarray*}
\Gamma_0 & = & 1, \\
\Gamma_{j} & = & \Gamma_{j-1}^2\left(1 - \frac{c^{2^{j-1}}/\Gamma_{j-1}}{a_{j} - 1}\right) = \Gamma_{j-1} \left(\Gamma_{j-1} - \frac{c^{2^{j-1}}}{a_{j} - 1}\right).
\end{eqnarray*}
If $\Gamma_1,\ldots,\Gamma_d > 0$, then $[a_1,\ldots,a_d]$ is $c$-guaranteed.
\end{corollary}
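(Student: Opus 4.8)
The plan is to deduce the corollary from Lemma~\ref{lem:box-bound} by showing that the hypothesis $\Gamma_1,\ldots,\Gamma_d>0$ already forces $\Delta_1,\ldots,\Delta_d>0$, where the $\Delta_j$ are the quantities defined in that lemma. Once the $\Delta_j$ are all known to be positive, the remark immediately preceding this corollary says that $[a_1,\ldots,a_d]$ is not $c$-colorable, i.e.\ is $c$-guaranteed, which is exactly the desired conclusion. So the whole proof reduces to establishing the termwise domination $\Delta_j\ge\Gamma_j$ for $0\le j\le d$, which I would prove by induction on $j$ with base case $\Delta_0=\Gamma_0=1$.

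The first step is to put both recurrences into a common form. Expanding the definitions, one finds
\[
\Delta_j=\Delta_{j-1}^2\Bigl(1+\tfrac{1}{a_j-1}\Bigr)-\tfrac{c^{2^{j-1}}}{a_j-1}\,\Delta_{j-1},
\qquad
\Gamma_j=\Gamma_{j-1}^2-\tfrac{c^{2^{j-1}}}{a_j-1}\,\Gamma_{j-1}.
\]
Writing $g_j(x)=x^2-\tfrac{c^{2^{j-1}}}{a_j-1}\,x$, this reads $\Gamma_j=g_j(\Gamma_{j-1})$ and $\Delta_j=g_j(\Delta_{j-1})+\tfrac{\Delta_{j-1}^2}{a_j-1}$. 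Since $a_j\ge 2$, the extra summand $\Delta_{j-1}^2/(a_j-1)$ is nonnegative, so it suffices to show $g_j(\Delta_{j-1})\ge g_j(\Gamma_{j-1})=\Gamma_j$.

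The main (and only real) obstacle is the monotonicity bookkeeping in this last comparison: one cannot simply pass from the inductive hypothesis $\Delta_{j-1}\ge\Gamma_{j-1}$ to $g_j(\Delta_{j-1})\ge g_j(\Gamma_{j-1})$, because $g_j$ is an upward-opening parabola that is decreasing for $x<\tfrac{c^{2^{j-1}}}{2(a_j-1)}$. This is precisely where the positivity hypothesis is used: from $\Gamma_j=\Gamma_{j-1}\bigl(\Gamma_{j-1}-\tfrac{c^{2^{j-1}}}{a_j-1}\bigr)>0$ together with $\Gamma_{j-1}>0$ (which holds by hypothesis for $j-1\ge 1$ and trivially for $j-1=0$) I get $\Gamma_{j-1}>\tfrac{c^{2^{j-1}}}{a_j-1}>\tfrac{c^{2^{j-1}}}{2(a_j-1)}$, and then $\Delta_{j-1}\ge\Gamma_{j-1}$ places $\Delta_{j-1}$ above the same threshold. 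Hence both arguments lie in the increasing range of $g_j$, giving $g_j(\Delta_{j-1})\ge g_j(\Gamma_{j-1})=\Gamma_j$ and therefore $\Delta_j\ge g_j(\Delta_{j-1})\ge\Gamma_j>0$. This closes the induction, so $\Delta_1,\ldots,\Delta_d>0$ and the corollary follows from Lemma~\ref{lem:box-bound}.
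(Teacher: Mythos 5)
Your proof is correct and takes essentially the same approach as the paper: the paper's proof consists of the single assertion that ``a routine induction shows that $\Gamma_j \leq \Delta_j$ for $0\leq j\leq d$,'' after which the remark preceding the corollary yields the conclusion, exactly as in your plan. Your write-up merely fills in the details of that routine induction, correctly handling the one genuine subtlety (that $g_j$ is not globally monotone, so positivity of $\Gamma_j$ must be used to place both $\Gamma_{j-1}$ and $\Delta_{j-1}$ in the increasing range of $g_j$).
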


\begin{proof}
Assume $\Gamma_1,\ldots,\Gamma_d > 0$.  A routine induction shows that $\Gamma_j \leq \Delta_j$ for $0\leq j\leq d$.
\end{proof}

\begin{lemma}\label{lem:eps-bound}
In the notation of Lemma~\ref{lem:box-bound}, let $\eps_j$ be given by the recurrence
\begin{eqnarray*}
\eps_0 & = & 0, \\
\eps_{j} & = & 2\eps_{j-1} + \frac{c^{2^{j-1}}}{a_{j} - 1}.
\end{eqnarray*}
If $\eps_d < 1$, then $[a_1,\ldots,a_d]$ is $c$-guaranteed.
\end{lemma}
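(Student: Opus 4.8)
The plan is to deduce this lemma from the preceding Corollary, which already guarantees that $[a_1,\ldots,a_d]$ is $c$-guaranteed once $\Gamma_1,\ldots,\Gamma_d>0$; thus it suffices to show that $\eps_d<1$ forces every $\Gamma_j$ to be positive. Writing $b_j=c^{2^{j-1}}/(a_j-1)$, the two recurrences become $\Gamma_j=\Gamma_{j-1}(\Gamma_{j-1}-b_j)$ and $\eps_j=2\eps_{j-1}+b_j$, and $\eps_j$ looks exactly like the linearization of the $\Gamma$-recurrence about its fixed point $1$. I would therefore try to prove the coupling inequality $\Gamma_j\ge 1-\eps_j$ for all $0\le j\le d$, which gives $\Gamma_j\ge 1-\eps_d>0$ and finishes the argument.

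Before the induction I would record two monotonicity facts. Since $\eps_0=0$ and $\eps_j-\eps_{j-1}=\eps_{j-1}+b_j\ge 0$, the sequence $(\eps_j)$ is nondecreasing, so $\eps_j\le\eps_d<1$ for every $j\le d$; and because $\eps_j\ge b_j$, this also yields $b_j<1$ for every $j$. The plan is then to prove the two-sided bound $1-\eps_j\le\Gamma_j\le 1$ by induction on $j$, the upper bound being needed to propagate the hypothesis. The base case is $\Gamma_0=1$. In the inductive step the upper bound is immediate from $\Gamma_j=\Gamma_{j-1}^2-\Gamma_{j-1}b_j\le\Gamma_{j-1}^2\le 1$, where $\Gamma_{j-1}\in[0,1]$ holds since $1-\eps_{j-1}>0$. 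For the lower bound I would use the telescoping identity
$$\Gamma_j-(1-\eps_j)=(g-1)(g+1-b_j)+2e,$$
where $g=\Gamma_{j-1}$ and $e=\eps_{j-1}$, obtained by expanding both sides.

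The crux is signing the product $(g-1)(g+1-b_j)$. Since $g\le 1$ its first factor is nonpositive, so if $g+1-b_j\le 0$ the product is nonnegative and the identity gives at least $2e\ge 0$. The delicate case is $g+1-b_j>0$: here I would bound $(1-g)(g+1-b_j)\le e\,(2-b_j)$ using $1-g\le e$ together with $g+1-b_j\le 2-b_j$, so that the right-hand side of the identity is at least $2e-e(2-b_j)=e\,b_j\ge 0$. I expect the main obstacle to be precisely this estimate, since it silently relies on $2-b_j>0$; this is exactly where the hypothesis $\eps_d<1$ is essential, as it is what forces each $b_j<1$. With the inequality $\Gamma_j\ge 1-\eps_j$ in hand for all $j$, positivity of $\Gamma_1,\ldots,\Gamma_d$ follows and the preceding Corollary yields the claim.
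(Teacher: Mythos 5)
Your proof is correct and takes essentially the same route as the paper: both reduce to the preceding Corollary by establishing the coupling inequality $\Gamma_j \ge 1-\eps_j$ by induction on $j$ (using monotonicity of the $\eps_j$ to keep every $\eps_j<1$). The paper's inductive step is slightly leaner---it chains $\Gamma_{i+1}=\Gamma_i(\Gamma_i-\eta)\ge\Gamma_i(1-\eps_i-\eta)\ge(1-\eps_i)(1-\eps_i-\eta)\ge 1-\eps_{i+1}$ using only $\Gamma_i\ge 0$ and positivity of $1-\eps_{i+1}$, so it never needs your auxiliary upper bound $\Gamma_j\le 1$ or the case analysis on the sign of $g+1-b_j$ (and note that in your ``delicate'' case the condition $2-b_j>0$ is automatic, since $2-b_j\ge g+1-b_j>0$; the hypothesis $\eps_d<1$ is really needed to keep $\Gamma_{j-1}\ge 0$ and to get strict positivity at the end).
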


\begin{proof}
Clearly, $0=\eps_0 < \eps_1 < \eps_2 < \cdots < \eps_d$, and so by
assumption $\eps_i < 1$ for all $i\in[d]$.  An induction on $i$
shows that $\Gamma_i \ge 1 - \eps_i$ for $0\le i\le d$: This is
clearly true for $i = 0$.  Suppose $i<d$ and $\Gamma_i \ge 1 -
\eps_i$.  Then setting $\eta := c^{2^i}/(a_{i+1}-1)$ and noting that
$\Gamma_i \ge 0$, we have
\begin{equation}\label{eqn:Gamma-lb}
\Gamma_{i+1} = \Gamma_i(\Gamma_i - \eta) \ge \Gamma_i(1 - \eps_i - \eta).
\end{equation}
The term in the parentheses is positive:
$$
1 - \eps_i - \eta \ge 1 - 2\eps_i - \eta = 1 - \eps_{i+1} > 0
$$
by assumption.  Thus continuing (\ref{eqn:Gamma-lb}) and using the inductive hypothesis again,
$$
\Gamma_i(1 - \eps_i - \eta) \ge (1 - \eps_i)(1 - \eps_i - \eta) \ge 1 - 2\eps_i - \eta  = 1 - \eps_{i+1}.
$$
\end{proof}

Motivated by the preceding lemma, for every $c,d\ge 1$ and grid $R = [a_1,\ldots,a_d]$ with $a_i \ge 2$ for all $i\in [d]$, we define
$$
\varepsilon_c(R) := \sum_{i=1}^d 2^{d-i}\frac{c^{2^{i-1}}}{a_i - 1}.
$$

\begin{lemma}\label{lem:key-fact}
If $R=[a_1,\ldots,a_d]$ is not $c$-guaranteed, then $\varepsilon_c(R) \ge 1$.
\end{lemma}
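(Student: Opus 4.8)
The plan is to recognize that the explicitly defined quantity $\varepsilon_c(R)$ is nothing more than the closed-form solution of the recurrence defining $\eps_d$ in Lemma~\ref{lem:eps-bound}. Once this identification is made, the desired statement is just the contrapositive of that lemma.

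First I would establish the identity $\varepsilon_c(R) = \eps_d$ by a routine induction on $j$, proving the closed form
$$
\eps_j = \sum_{i=1}^j 2^{j-i}\frac{c^{2^{i-1}}}{a_i - 1}
$$
for each $0 \le j \le d$. The base case $\eps_0 = 0$ is the empty sum, and the inductive step follows by substituting the closed form for $\eps_{j-1}$ into $\eps_j = 2\eps_{j-1} + c^{2^{j-1}}/(a_j - 1)$: the factor of $2$ shifts each weight $2^{(j-1)-i}$ up to $2^{j-i}$, while the new term supplies the $i=j$ summand with weight $2^{j-j}=1$. Setting $j = d$ yields precisely the defining sum for $\varepsilon_c(R)$.

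With the identity in hand, the conclusion is immediate. Lemma~\ref{lem:eps-bound} asserts that $\eps_d < 1$ implies $R$ is $c$-guaranteed. Taking the contrapositive, if $R$ is not $c$-guaranteed then we cannot have $\eps_d < 1$, so $\eps_d \ge 1$; since $\eps_d = \varepsilon_c(R)$, the statement follows.

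I do not anticipate any genuine obstacle here: all of the combinatorial content was already carried out in Lemmas~\ref{lem:box-bound} and~\ref{lem:eps-bound}, and this statement merely repackages the latter in a form convenient for the sections to come. The only point requiring any care is confirming that the explicit weights $2^{d-i}$ appearing in the definition of $\varepsilon_c(R)$ agree with the powers of $2$ generated by iterating the recurrence, which is exactly what the induction above verifies.
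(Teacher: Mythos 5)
Your proposal is correct and is exactly the paper's argument: the paper's proof likewise observes that the quantities $\varepsilon_c([a_1,\ldots,a_j]) = \sum_{i=1}^j 2^{j-i}c^{2^{i-1}}/(a_i-1)$ satisfy the recurrence of Lemma~\ref{lem:eps-bound}, and then the conclusion is the contrapositive of that lemma. Your write-up merely makes explicit the routine induction verifying the closed form, which the paper leaves to the reader.
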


\begin{proof}
We let $\varepsilon_j := \varepsilon_c([a_1,\ldots,a_j]) =
\sum_{i=1}^j 2^{j-i}c^{2^{i-1}}/(a_i-1)$ for all $j$
with $0\le j\le d$, and notice that the $\varepsilon_j$ satisfy
the recurrence in Lemma~\ref{lem:eps-bound}.
\end{proof}

\begin{corollary}\label{cor:box-bound}
For any fixed $d\ge 1$ and $c\ge 2$, if $n$ is least such that $[n]^d$ is $c$-guaranteed, then $n < (d+2) c^{2^{d-1}}$.  Furthermore,
$$
2^{-d} e^{-1} < \frac{V(c,d)}{c^{2^d-1}} < (d+2)^d 2^{d(d-1)/2}.
$$
\end{corollary}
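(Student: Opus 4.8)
The plan is to treat the three inequalities separately, in each case reducing to material already established. The lower bound $2^{-d}e^{-1} < V(c,d)/c^{2^d-1}$ requires no new work: it is just the first theorem of the paper, $V(c,d) > c^{2^d-1}/(e2^d)$, divided through by $c^{2^d-1}$. The engine for both remaining inequalities is the contrapositive of Lemma~\ref{lem:key-fact}: any grid $R$ with $\varepsilon_c(R) < 1$ is $c$-guaranteed.

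For the bound $n < (d+2)c^{2^{d-1}}$ on the least $n$ with $[n]^d$ being $c$-guaranteed, I would specialize $\varepsilon_c$ to the cube, obtaining $\varepsilon_c([n]^d) = (n-1)^{-1}\sum_{i=1}^d 2^{d-i}c^{2^{i-1}}$, so that it suffices to make $n-1$ exceed that sum. The key estimate is that every summand satisfies $2^{d-i}c^{2^{i-1}} \le c^{2^{d-1}}$; for $c\ge 2$ this reduces to the elementary inequality $d-i \le 2^{d-1}-2^{i-1}$, which I would verify by writing $d = i+k$ and using $2^k \ge k+1$ together with $2^{i-1}\ge 1$. Hence the sum is at most $d\,c^{2^{d-1}}$, the least integer $n$ exceeding $1 + d\,c^{2^{d-1}}$ is at most $2 + d\,c^{2^{d-1}}$, and this is strictly below $(d+2)c^{2^{d-1}} = d\,c^{2^{d-1}} + 2c^{2^{d-1}}$ precisely because $c^{2^{d-1}} > 1$.

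The substantive part is the upper bound $V(c,d)/c^{2^d-1} < (d+2)^d 2^{d(d-1)/2}$. By definition of $V(c,d)$, it is enough to exhibit a single $c$-guaranteed grid of volume strictly less than $(d+2)^d 2^{d(d-1)/2}c^{2^d-1}$. Guided by the target exponents, I would choose $a_i := 1 + (d+1)2^{d-i}c^{2^{i-1}}$, an integer, so that the $i$-th term of $\varepsilon_c(R)$ is exactly $1/(d+1)$ and therefore $\varepsilon_c(R) = d/(d+1) < 1$; the grid is thus $c$-guaranteed. The volume computation is the crux: since $2^{d-i}c^{2^{i-1}}\ge 1$ for every $i\in[d]$, one has $a_i < (d+2)2^{d-i}c^{2^{i-1}}$, and multiplying over $i$ and invoking the telescoping identities $\prod_{i=1}^d c^{2^{i-1}} = c^{\sum_{i=1}^d 2^{i-1}} = c^{2^d-1}$ and $\prod_{i=1}^d 2^{d-i} = 2^{\sum_{k=0}^{d-1}k} = 2^{d(d-1)/2}$ yields a volume strictly below $(d+2)^d 2^{d(d-1)/2}c^{2^d-1}$, as required.

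The only genuine obstacle is making the constant $(d+2)$ emerge exactly, and this is entirely a matter of the two design choices above: the factor $d+1$ in $a_i$ is calibrated to force $\varepsilon_c(R) < 1$, while the additive ``$1$'' in $a_i = 1 + \cdots$ is exactly the slack absorbed by the bound $2^{d-i}c^{2^{i-1}} \ge 1$ that converts $a_i$ into $(d+2)2^{d-i}c^{2^{i-1}}$. Everything else is routine bookkeeping with those product identities, so I do not anticipate any hidden difficulty.
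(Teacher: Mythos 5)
Your proposal is correct and takes essentially the same route as the paper: both arguments rest on the contrapositive of Lemma~\ref{lem:key-fact}, the same grid $a_i = (d+1)2^{d-i}c^{2^{i-1}}+1$ with $\varepsilon_c(R) = d/(d+1)$, and the same telescoping product computation, with the lower bound quoted from the Local Lemma theorem. The only (cosmetic) difference is the cube bound: the paper just takes $n := a_d$ and invokes dominance, whereas you re-run the $\varepsilon_c$ estimate directly on $[n]^d$, incidentally getting the marginally sharper $n \le d\,c^{2^{d-1}}+2$.
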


\begin{proof}
If we take $a_j = (d+1)2^{d-j}c^{2^{j-1}}+1$ for all $1 \leq j \leq
d$, then $\eps_c(R) = d/(d+1) < 1$.  The second result now follows from the fact that
\begin{align*}
\prod_{j=1}^d a_j &< \prod_{j=1}^d (d+2)2^{d-j}c^{2^{j-1}} \\
&= (d+2)^d 2^{\sum_{j=1}^d (d-j)}c^{\sum_{j=1}^d 2^{j-1}} \\
&= (d+2)^d 2^{\sum_{j=1}^{d-1} j}c^{\sum_{j=0}^{d-1} 2^{j}} \\
&= (d+2)^d 2^{d(d-1)/2}c^{2^d - 1}.
\end{align*}
The first result follows by taking $n:=a_d$.
\end{proof}

\section{Hereditarily large grids are $c$-guaranteed} \label{sec:hereditary}

It is possible for grids of arbitrarily large volume to be $c$-colorable.  Indeed, one need only have one of the dimensions be at most $c$, and then color the grid with this coordinate.  However, if we require that each lower dimensional sub-grid be sufficiently voluminous, then the whole grid is $c$-colorable.  This statement is made precise by the following theorem.

\begin{theorem}\label{thm:subgrids}
Fix $d>0$, and define $C_j = (d 2^d)^{\frac{3}{2} (3^{j-1}-1)}$ for $j \geq 1$.  For all integers $c \ge 1$ and $1\le a_1\le a_2\le \cdots \le a_d$, if $\prod_{i=1}^j a_i > C_j c^{(3^j - 1)/2}$ for all $j\in{[d]}$, then $[a_1,\ldots,a_d]$ is $c$-guaranteed.
\end{theorem}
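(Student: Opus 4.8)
The plan is to reduce the statement to the positivity of the auxiliary quantities $\Delta_j$ from Lemma~\ref{lem:box-bound} and then to deduce that positivity from the volume hypotheses. By Lemma~\ref{lem:box-bound}, $[a_1,\ldots,a_d]$ is $c$-guaranteed as soon as $\Delta_1,\ldots,\Delta_d>0$. Unwinding the recurrence, when $\Delta_{j-1}>0$ the factor $\Delta_{j-1}^2$ is positive, so the sign of $\Delta_j$ is that of $1-\bigl(c^{2^{j-1}}/\Delta_{j-1}-1\bigr)/(a_j-1)$; hence, given $\Delta_{j-1}>0$, one has $\Delta_j>0$ if and only if $a_j\Delta_{j-1}>c^{2^{j-1}}$. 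Thus maintaining positivity along the whole chain is equivalent to the $d$ inequalities $a_j\Delta_{j-1}>c^{2^{j-1}}$ for $j\in[d]$ (note the case $j=1$ is exactly $a_1>c$, which is the hypothesis at $j=1$), and it suffices to establish these from $\prod_{i=1}^j a_i>C_jc^{(3^j-1)/2}$.

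First I would prove them by induction on $j$, carrying an explicit lower bound $\Delta_j\ge\psi_j>0$. Granting $\Delta_{j-1}\ge\psi_{j-1}$, the $j$-th inequality follows once $a_j>c^{2^{j-1}}/\psi_{j-1}$; and if $a_j$ clears this threshold with a factor-two margin, the recurrence gives $\Delta_j\ge\Delta_{j-1}^2/2$, which drives the next bound. In the generic regime where the $\Delta_j$ stay bounded away from $0$ this unwinds to $\Delta_j\ge 2^{1-2^j}$, so the threshold is $a_j\gtrsim(2c)^{2^{j-1}}$ times a constant depending only on $j$. The doubly-exponential-in-$j$ constant $2^{2^{j-1}}$ here is comfortably dominated by $(d2^d)^{\frac32(3^{j-1}-1)}$, since $\tfrac32(3^{j-1}-1)\gg 2^{j-1}$.

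The real work is to extract the per-coordinate thresholds $a_j>c^{2^{j-1}}/\Delta_{j-1}$ from the hypotheses, which bound the prefix volumes $W_j:=\prod_{i\le j}a_i$ rather than the individual $a_j$. Here I would use the monotonicity $a_1\le\cdots\le a_d$: for every $k\le j$, $a_j\ge\bigl(\prod_{i=k}^{j}a_i\bigr)^{1/(j-k+1)}=(W_j/W_{k-1})^{1/(j-k+1)}$, so a short prefix of possibly small coordinates can be divided out before taking roots. The claim is that the base-$3$ exponents $e_j=(3^j-1)/2$ (satisfying $e_j=3e_{j-1}+1$) and the cube recursion $C_j=C_{j-1}^3(d2^d)^3$ are calibrated exactly for this estimate: the doubling of the $c$-exponent in the threshold (which forces $a_{j+1}\gtrsim c^{2^j}/\Delta_j$) together with the $1/(j-k+1)$ loss from the root makes the required volume exponent grow with ratio $3$, while each factor $(d2^d)^3$ absorbs the accumulated $2^{d-j}$ weight losses and the factor-two margins.

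I expect the main obstacle to be precisely this last reconciliation in the delicate case where early coordinates are as small as the hypotheses permit. For instance, $a_1=c+1$ forces $\Delta_1=1/c$, and then $\Delta_j\le(1/c)^{2^{j-1}}$ can decay like $c^{-2^{j-1}}$ rather than staying $\Theta(1)$; this raises the threshold for $a_{j+1}$ all the way to $\sim c^{3\cdot 2^{j-1}}$. One must therefore track a $c$-dependent lower bound $\psi_{j-1}$ on $\Delta_{j-1}$ and verify that, even in this worst case, $\bigl(W_{j+1}/a_1\bigr)^{1/j}$ clears it — which reduces exactly to the inequality $(3^j-1)/(2j)\ge 2^{j-1}$ that makes base $3$ (rather than base $2$) necessary. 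Once the chain $a_j\Delta_{j-1}>c^{2^{j-1}}$ is secured for all $j$, the conclusion is immediate from Lemma~\ref{lem:box-bound}.
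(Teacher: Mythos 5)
Your opening reduction is set up correctly: by Lemma~\ref{lem:box-bound} it suffices to show $\Delta_1,\ldots,\Delta_d>0$, and indeed, given $\Delta_{j-1}>0$, one has $\Delta_j>0$ if and only if $a_j\Delta_{j-1}>c^{2^{j-1}}$. But the entire content of the theorem, on your route, is the claim that the volume hypotheses force these $d$ inequalities --- and that claim is never proved; you explicitly defer it as ``the real work,'' and the assertion that it ``reduces exactly to the inequality $(3^j-1)/(2j)\ge 2^{j-1}$'' cannot be right as stated. That inequality holds with \emph{equality} at $j=1$ and $j=2$, so in those regimes exponent-counting decides nothing: the conclusion hinges on the constants $C_j=(d2^d)^{\frac32(3^{j-1}-1)}$, on a quantitative worst-case lower bound for $\Delta_j$ (whose proof is intertwined with the very threshold-clearing you are trying to establish, so a joint induction must be formulated --- you never do this), and on a case distinction you never make, because the binding lower bound on $a_{j+1}$ alternates between monotonicity ($a_{j+1}\ge a_j$) and the volume hypothesis ($a_{j+1}>C_{j+1}c^{(3^{j+1}-1)/2}/\prod_{i\le j}a_i$) depending on the prefix. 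Concretely, take $d=3$, $a_1=c+1$, $a_2\approx c^6$; this prefix satisfies the hypotheses for large $c$. Then $\Delta_1=1/c$, $\Delta_2\approx c^{-2}$, and the threshold for $a_3$ is $c^4/\Delta_2>c^6$; monotonicity gives only $a_3\ge c^6$, which does \emph{not} clear it, while the volume hypothesis gives $a_3>C_3c^{13}/(a_1a_2)\approx C_3c^6$, so positivity of $\Delta_3$ is decided purely by the constant $C_3$. Your sketch has no mechanism for verifying such constant-level margins, nor for propagating them: each factor $\theta_i=\Delta_i/\Delta_{i-1}^2$ enters $\Delta_j$ with exponent $2^{j-i}$, so margins must be strong enough to survive repeated squaring, which is again a constant-tracking induction, not an exponent inequality.

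It is worth seeing how the paper sidesteps exactly this difficulty, because the contrast explains why your route is so delicate. The paper inducts on $d$ and splits on $\varepsilon_c(R)$. If $\varepsilon_c(R)<1$, Lemma~\ref{lem:key-fact} finishes immediately. Otherwise some coordinate $a_j$ is small (at most $d2^{d-j+1}c^{2^{j-1}}$), hence the prefix volume $\prod_{i\le j}a_i$ is small; rather than tracking how badly a small coordinate degrades $\Delta$, the paper \emph{re-colors}: by Lemma~\ref{lem:virtual-colors}, a monochromatic box of $R_j$ together with its color serves as one of $c'=c\prod_{i\le j}\binom{a_i}{2}=O(c^{3^j})$ virtual colors on $\overline R_j$, and the inductive hypothesis is applied to $\overline R_j$ with $c'$ colors --- the base-$3$ exponents $(3^j-1)/2$ and the constants $C_j$ are calibrated for precisely this recursion. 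If you wish to complete your approach instead, you must state and prove, by induction over prefixes satisfying the hypotheses, an explicit invariant (a lower bound on $\Delta_j$, or equivalently an upper bound on $\prod_{i\le j}a_i/\Delta_j$) together with the two-case threshold analysis above; as written, your proposal is a plan whose central claims remain unverified, not a proof.
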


We require a lemma and a bit of notation: If $R = [a_1,\ldots,a_d]$ and $1\leq j < d$, let $R_j$ denote $[a_1,\ldots,a_j]$ and let $\overline{R}_j$ denote $[a_{j+1},\ldots,a_d]$.  Note that, if $R$ is $c$-guaranteed, then $R_j$ is as well.  Indeed, if $f : R_j \rightarrow [c]$ is a $c$-coloring of $R_j$, then the function $g : R \rightarrow [c]$ defined by $g(x_1,\ldots,x_d) = f(x_1,\ldots,x_j)$ is a $c$-coloring of $R$.  We will also make repeated use of the following easily verified fact: For every integer $j\ge 0$, $j\cdot 2^{j-1} \le (3^j - 1)/2$ and $j\cdot 2^j + 1 \le 3^j$.

\begin{lemma}\label{lem:virtual-colors}
Let $c\ge 1$, let $R = [a_1,\ldots,a_d]$ be a grid, and let $j\in {[d-1]}$.  Define
$$
c^\prime := c\cdot \prod_{i=1}^j\binom{a_i}{2} \le 2^{-j} \cdot c \cdot \prod_{i=1}^j a_i^2.
$$
If $R_j$ is $c$-guaranteed and $\overline{R}_j$ is $c^\prime$-guaranteed, then $R$ is $c$-guaranteed.
\end{lemma}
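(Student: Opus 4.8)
The plan is to use a \emph{virtual coloring} argument: I would identify the set $[c']$ of ``virtual colors'' with the set of all pairs (box, color) living in the lower-dimensional factor $R_j$, and then transport a guaranteed monochromatic box up through the higher-dimensional factor $\overline{R}_j$. The point is that the hypotheses give us a monochromatic box in every $R_j$-slice, and we only need $\overline{R}_j$ to have enough ``colors'' to pin down a single such box across a box's worth of slices.

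First I would write each point of $R$ as a pair $(x,y)$ with $x\in R_j$ and $y\in\overline{R}_j$, and, for a fixed coloring $f:R\to[c]$, consider for each $y\in\overline{R}_j$ the induced slice coloring $f_y:R_j\to[c]$ given by $f_y(x)=f(x,y)$. Since $R_j$ is $c$-guaranteed, each $f_y$ admits at least one monochromatic box; I fix a choice of such a box $B(y)\subseteq R_j$ together with its color $\chi(y)\in[c]$. The key counting observation is that the number of boxes in $R_j$ is exactly $\prod_{i=1}^j\binom{a_i}{2}$, so the number of pairs (box, color) is precisely $c\prod_{i=1}^j\binom{a_i}{2}=c'$ (this also makes the displayed inequality $c'\le 2^{-j}c\prod_i a_i^2$ immediate from $\binom{a_i}{2}\le a_i^2/2$). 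I can therefore define a genuine $c'$-coloring $g:\overline{R}_j\to[c']$ by $g(y)=(B(y),\chi(y))$.

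Next I would invoke the hypothesis that $\overline{R}_j$ is $c'$-guaranteed to produce a $g$-monochromatic box $B'\subseteq\overline{R}_j$. By definition all $2^{d-j}$ vertices $y$ of $B'$ share a common value $g(y)=(B_0,i_0)$; that is, each such $y$ has $B(y)=B_0$ and $\chi(y)=i_0$, so $f_y$ is identically $i_0$ on the single fixed box $B_0\subseteq R_j$. Finally I would assemble the answer as the product $B_0\times B'$. Since $B_0$ is a box in coordinates $1,\ldots,j$ and $B'$ is a box in coordinates $j+1,\ldots,d$, their product is a bona fide $d$-dimensional box in $R$ with $2^j\cdot 2^{d-j}=2^d$ vertices. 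For any vertex $(x,y)$ of this box, $y$ is a vertex of $B'$ and $x$ is a vertex of $B_0=B(y)$, whence $f(x,y)=f_y(x)=i_0$; thus $B_0\times B'$ is monochromatic and $R$ is $c$-guaranteed.

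The argument is a clean two-level pigeonhole, so I expect no serious analytic obstacle; the one point demanding care is the bookkeeping that the virtual-color set has size exactly $c'$ (not merely at most $c'$) and that the product of a box in the first $j$ coordinates with a box in the last $d-j$ coordinates genuinely meets the paper's definition of a $d$-dimensional box. I would verify these explicitly, and note in passing that the $c$-guaranteed hypothesis on $R_j$ forces $a_i\ge 2$ for $i\le j$, so that $\binom{a_i}{2}\ge 1$ and the construction is nonvacuous.
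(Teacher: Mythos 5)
Your proposal is correct and follows essentially the same argument as the paper: you define the same ``virtual'' $c'$-coloring $g(y)=(B(y),\chi(y))$ on $\overline{R}_j$ from a chosen monochromatic box in each $R_j$-slice, apply the $c'$-guaranteed hypothesis to get a $g$-monochromatic box $B'$, and conclude with the product box $B_0\times B'$. The extra bookkeeping you flag (exact count of virtual colors, the product of boxes being a box) is handled implicitly in the paper and your explicit verification is harmless.
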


\begin{proof}
Assume that $R_j$ is $c$-guaranteed and that $\overline{R}_j$ is $c^\prime$-guaranteed.  Suppose that $f : R \rightarrow [c]$ is a $c$-coloring.  Consider the coloring $g : \overline{R}_j \rightarrow [c^\prime]$ that assigns the pair $(B,s)$ to the point $\bv$, $B$ being an arbitrary choice of $j$-dimensional box colored monochromatically by $f_j : R_j \rightarrow [c]$, where $f_j(x_1,\ldots,x_j) = f(x_1,\ldots,x_j,\bv)$, and $s$ being its color.  (Note that $R_j$ is $c$-guaranteed, so such a $B$ always exists.) Then $g$ is a $c^\prime$-coloring, because there are exactly $c^\prime$ many different $(B,s)$.
Since $\overline{R}_j$ is $c^\prime$-guaranteed, $g$ colors some $(d-j)$-dimensional box $B_1$ monochromatically, with color $(B_2,s)$.  But then $B_2 \times B_1$ is a $d$-dimensional box monocolored by $f$ with color $s$.
\end{proof}

\begin{proof}[Proof of Theorem \ref{thm:subgrids}.]  The statement is clearly true when $d=1$ since $C_1 = 1$.  Suppose $d>1$ and the statement is true for all $d^\prime < d$.  Let $R = [a_1, \cdots, a_d]$ be a monotone grid satisfying the hypothesis of the theorem.

\emph{Case 1:} $\varepsilon_c(R) < 1$.  The result follows immediately from Lemma~\ref{lem:key-fact}.

\emph{Case 2:} $\varepsilon_c(R) \geq 1$.  Then there is some $j\in {[d]}$ such that $2^{d-j}c^{2^{j-1}}/(a_j-1) \geq 1/d$, i.e.,
$$
a_j \leq d 2^{d-j} c^{2^{j-1}} +1 < d 2^{d-j+1} c^{2^{j-1}}.
$$
Since $j2^j \leq 3^j-1$ for all integers $j \geq 1$,
$$
\prod_{i=1}^j a_i \le \prod_{i=1}^j a_j < d^j 2^{j(d-j+1)} c^{j2^{j-1}} \leq d^j 2^{j(d-j+1)} c^{(3^j-1)/2},
$$
and so for all $k\in {[d-j]}$,
$$
\prod_{i=1}^k a_{j+i} > \frac{C_{j+k}}{d^j 2^{j(d-j+1)}} c^{(3^{j+k} - 1)/2 - (3^j-1)/2} \geq \frac{C_{j+k}}{d^j 2^{j(d-j+1)}} c^{3^j(3^k - 1)/2}.
$$
Let $c^\prime = d^{2j}2^{2j(d-j+1)}c^{3^j}$.  (Note that $c'\ge
c\cdot\prod_{i=1}^j a_i^2$.)  Then for all $k\in {[d-j]}$,
\begin{align*}
\prod_{i=1}^k a_{j+i} &> \frac{C_{j+k}}{d^j 2^{j(d-j+1)}} \left (\frac{c^\prime}{d^{2j}2^{2j(d-j+1)}} \right)^{(3^k - 1)/2} \\
&= \frac{(d 2^d)^{\frac{3}{2} (3^{j+k-1}-1)}}{(d 2^{d-j+1})^{j3^k}} {c^\prime}^{(3^k - 1)/2} \\
&\ge (d 2^d)^{\frac{3}{2} (3^{j+k-1}-1) - j3^k} {c^\prime}^{(3^k - 1)/2} \\
&\ge (d 2^d)^{\frac{3}{2} (3^{j+k-1}-1) - (3^{j} - 1) 3^k/2} {c^\prime}^{(3^k - 1)/2},
\end{align*}
because $j \leq (3^j-1)/2$ for all $j \geq 1$.  Continuing the computation,
\begin{align*}
\prod_{i=1}^k a_{j+i} &> (d 2^d)^{\frac{3}{2} (3^{j+k-1}-1) - (3^{j} - 1) 3^k/2} {c^\prime}^{(3^k - 1)/2} \\
&= (d 2^d)^{\frac{3}{2} (3^{j+k-1}-1 - 3^{j+k-1} + 3^{k-1})} {c^\prime}^{(3^k - 1)/2} \\
&= (d 2^d)^{\frac{3}{2} (3^{k-1}-1)} {c^\prime}^{(3^k - 1)/2} \\
&= C_k {c^\prime}^{(3^k - 1)/2}.
\end{align*}
Therefore $\overline{R}_j = [a_{j+1},\ldots,a_d]$ is $c^\prime$-guaranteed by the inductive hypothesis.  (It is easy to see that the $C_j$'s are increasing in $d$, so taking $d^\prime = d-j$ causes no problem here.)  Since $R_j$ is also $c$-guaranteed by the inductive hypothesis, we may apply Lemma \ref{lem:virtual-colors} to conclude that $R$ is $c$-guaranteed.
\end{proof}

\section{Upper bounds on the volume of obstruction grids}

Before proceeding, we introduce the following notation.  For $d\geq 1$ and any monotone grid $R = [a_1,\ldots,a_d]$ where $a_d >1$, we let $R^-$ denote the monotone grid obtained from $R$ by subtracting one from $a_j$, where $j\in [d]$ is least such that $a_j = a_d$.  Note that if $R$ is monotone and $R \in \cO(c,d)$, then $R$ is $c$-guaranteed but $R^-$ is not $c$-guaranteed.

The next theorem gives an asymptotic upper bound on the volume $\prod_{i=1}^d a_i$ of any grid $[a_1,\ldots,a_d] \in \cO(c,d)$.

\begin{theorem}\label{thm:volume}
For every $d\ge 1$ and every grid $R = [a_1,\ldots,a_d] \in \cO(c,d)$,
$$
\prod_{i=1}^d a_i = O\left( c^{(3^d-1)/2} \right).
$$
\end{theorem}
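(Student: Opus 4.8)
The plan is to induct on $d$, reducing the bound for a $d$-dimensional obstruction to the same bound in a strictly smaller dimension by peeling off the ``top block'' of maximal coordinates. Fix $R=[a_1,\ldots,a_d]\in\cO(c,d)$, let $k$ be least with $a_k=a_d$, and set $m=d-k+1$, so that $R=R_{k-1}\times C$ with $C=[a_d]^m$ a cube and $R_{k-1}=[a_1,\ldots,a_{k-1}]$ having all coordinates strictly below $a_d$. The base case $d=1$ is immediate since $\cO(c,1)=\{[c+1]\}$ and $c+1=O(c^{(3^1-1)/2})$; the case $k=1$ (where $R$ is itself a cube) is handled directly by the block estimate below, so the recursion is only invoked when $k\ge 2$, hence in dimension $k-1<d$.

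First I would record two facts about the decomposition. Since $R$ is $c$-guaranteed, every prefix, in particular $R_{k-1}$, is $c$-guaranteed (guaranteedness descends to prefixes, by the lifting observation in Section~\ref{sec:hereditary}). Next, because $R\in\cO(c,d)$ the grid $R^-=R_{k-1}\times[a_d-1,a_d,\ldots,a_d]$ is $c$-colorable; splitting this colorable grid at coordinate $k-1$, applying the contrapositive of Lemma~\ref{lem:virtual-colors}, and using that $R_{k-1}$ is $c$-guaranteed (so the ``prefix is colorable'' alternative is ruled out), forces the reduced block $[a_d-1,a_d,\ldots,a_d]$ to be $c'$-colorable, where $c'=c\prod_{i<k}\binom{a_i}{2}$.

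The engine is then the \emph{block estimate} $a_d^{\,m}=O\big((c')^{(3^m-1)/2}\big)$. Applying the contrapositive of Theorem~\ref{thm:subgrids} to the $c'$-colorable block gives some $\ell\le m$ with $(a_d-1)a_d^{\ell-1}\le C_\ell(c')^{(3^\ell-1)/2}$; since the block is a cube apart from its single reduced coordinate, bounding this length-$\ell$ prefix already bounds $a_d$, and the elementary monotonicity $(3^\ell-1)/\ell\le(3^m-1)/m$ for $\ell\le m$ upgrades $a_d^\ell=O((c')^{(3^\ell-1)/2})$ to $a_d^{\,m}=O((c')^{(3^m-1)/2})$. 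Feeding in the inductive bound $\prod_{i<k}a_i=O(c^{(3^{k-1}-1)/2})$ gives $c'\le c\prod_{i<k}a_i^{\,2}=O(c^{3^{k-1}})$, whence
$$
\prod_{i=1}^d a_i=\Big(\prod_{i<k}a_i\Big)\,a_d^{\,m}=O\big(c^{(3^{k-1}-1)/2}\big)\cdot O\big(c^{3^{k-1}(3^m-1)/2}\big)=O\big(c^{(3^d-1)/2}\big),
$$
using $\tfrac{3^{k-1}-1}{2}+\tfrac{3^{k-1}(3^m-1)}{2}=\tfrac{3^{k-1+m}-1}{2}$ and $k-1+m=d$. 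This exponent bookkeeping is exactly parallel to the one driving Theorem~\ref{thm:subgrids}.

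The one real obstacle is invoking the inductive hypothesis on $R_{k-1}$: the induction is phrased for obstruction grids (equivalently, $c$-guaranteed grids whose maximal-coordinate reduction is $c$-colorable), and while $R_{k-1}$ is automatically $c$-guaranteed, I must still check that $(R_{k-1})^-$ is $c$-colorable. Minimality of $R$ gives that $(R_{k-1})^-\times C$ is $c$-colorable, and the contrapositive of Lemma~\ref{lem:virtual-colors} at coordinate $k-1$ says that either $(R_{k-1})^-$ is $c$-colorable — which is what I want — or $C$ is $c''$-colorable for $c''=c\prod_{i<k}\binom{((R_{k-1})^-)_i}{2}\le c'$. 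To rule out the second alternative I expect to show that $C=[a_d]^m$ is $c'$-guaranteed (hence $c''$-guaranteed, since guaranteedness only gets easier with fewer colors); as $C^-$ is already $c'$-colorable, this amounts to placing the cube $C$ exactly at its $c'$-threshold, i.e.\ $C\in\cO(c',m)$. Transferring the obstruction property across the product in this way — stripping the cube factor while preserving colorability of the reduced prefix — is delicate precisely because Lemma~\ref{lem:virtual-colors} runs in only one direction, and this is where I expect the main work to lie, likely via a separate cube threshold lemma in the spirit of Corollary~\ref{cor:box-bound}.
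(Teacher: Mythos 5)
Your outer skeleton closely parallels the paper's (decomposing at the block of maximal coordinates, applying the contrapositive of Lemma~\ref{lem:virtual-colors}, bounding $c'$ by $O(c^{3^{k-1}})$, and the exponent arithmetic $(3^{k-1}-1)/2 + 3^{k-1}(3^m-1)/2 = (3^d-1)/2$), and your block estimate via the contrapositive of Theorem~\ref{thm:subgrids} is a legitimate substitute for the paper's $\varepsilon$-term extraction. But the step you flag as the "one real obstacle" is a genuine gap, and it cannot be closed along the lines you suggest. The problem is that prefixes of obstructions need not be obstructions: minimality of $R$ only gives colorability of reductions of the \emph{full} grid, and colorability does not descend from a grid to its prefix --- the cylindrical-lifting observation gives "prefix colorable $\Rightarrow$ whole colorable" (equivalently, "whole guaranteed $\Rightarrow$ prefix guaranteed"), whereas you need the reverse direction to conclude that $(R_{k-1})^-$ is $c$-colorable from the colorability of $(R_{k-1})^-\times C$. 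Your proposed patch --- showing $C = [a_d]^m$ is $c'$-guaranteed, indeed $C\in\cO(c',m)$ --- amounts to a \emph{converse} of Lemma~\ref{lem:virtual-colors}: that the only way a product can be $c$-guaranteed is via the virtual-colors mechanism. The lemma is a one-way (sufficient, not necessary) condition, no such converse appears in the paper, and nothing in the hypotheses forces the cube factor of an obstruction to sit at its $c'$-threshold; so the alternative "$C$ is $c''$-colorable" in your case analysis cannot be ruled out. (A smaller inaccuracy: your parenthetical claim that obstructions are "equivalently, $c$-guaranteed grids whose maximal-coordinate reduction is $c$-colorable" is false --- that condition is strictly weaker than minimality --- though this is harmless since using the weaker predicate only strengthens the inductive statement; the fatal point is that even this weaker predicate cannot be verified for $R_{k-1}$.)

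The paper avoids this entirely by never recursing into the prefix as a standalone near-minimal grid. It proves the stronger Lemma~\ref{lem:pinch-points} by iterating inside the single grid $R$: from $\varepsilon_c(R^-)\ge 1$ (Lemma~\ref{lem:key-fact}, available because $R^-$ is $c$-colorable) it extracts the largest $\ell$ whose $\varepsilon$-term is $\ge 1/d$, which bounds $a_\ell \le (d+2)2^{d-\ell}c^{2^{\ell-1}}$ and hence, by monotonicity alone, $\prod_{i\le\ell} a_i \le a_\ell^\ell = O\bigl(c^{(3^\ell-1)/2}\bigr)$ --- the prefix volume bound requires no inductive hypothesis at all. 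It then decomposes the \emph{original} reduced grid as $R^- = R_\ell\times(\overline{R}_\ell)^-$, applies the contrapositive of Lemma~\ref{lem:virtual-colors} using only that $R_\ell$ is $c$-guaranteed (automatic for prefixes of the guaranteed $R$) and that $R^-$ is colorable (from minimality of the full $R$), concludes $\varepsilon_{c'}\bigl((\overline{R}_\ell)^-\bigr)\ge 1$ with $c'=O(c^{3^\ell})$, and repeats with the next pinch point $m$ --- crucially, each iteration re-decomposes $R^-$ at the new pinch point ($R^- = R_m \times (\overline{R}_m)^-$), so the only guaranteedness facts ever needed are about prefixes of $R$ with respect to $c$ itself. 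In short: minimality hands you information about reductions of $R$, the $\varepsilon$-function converts that into per-coordinate bounds, and the iteration inflates the color count to handle the later coordinates; your version tries instead to route the minimality information through $R_{k-1}$ as an obstruction in its own right, which is precisely the step that minimality of $R$ does not support. A corrected write-up along your lines would keep your block estimate but reorganize the argument as the paper's pinch-point iteration rather than an induction on dimension through $R_{k-1}$.
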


The theorem follows immediately from the following lemma:

\begin{lemma}\label{lem:pinch-points}
For every $d\ge 1$, every $c\ge 2$, and every monotone grid $R = [a_1,\ldots,a_d] \in \cO(c,d)$, there is a set $P\subseteq [d]$ such that
\begin{enumerate}
\item
$d\in P$,
\item\label{item:vol-bound}
$\prod_{i=1}^\ell a_i = O\left( c^{(3^\ell-1)/2} \right)$ for every $\ell\in P$, and
\item\label{item:ak-bound}
For every $k\in [d]$,
$$
a_k = O\left( c^{3^j\cdot 2^{\ell - j - 1}} \right),
$$
where $\ell$ is the least element of $P$ that is $\ge k$, and $j$ is the biggest element of $P$ that is $<k$ ($j=0$ if there is no such element).
\end{enumerate}
(We call the elements of $P$ \emph{pinch points} for $R$.)
\end{lemma}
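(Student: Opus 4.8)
The plan is to induct on $d$, reducing the dimension by peeling a \emph{suffix} of coordinates. First I would dispose of condition~\ref{item:vol-bound}: granting condition~\ref{item:ak-bound}, the bound $\prod_{i\le \ell}a_i=O(c^{(3^\ell-1)/2})$ at a pinch point $\ell$ follows by multiplying the per-coordinate bounds of condition~\ref{item:ak-bound} across the intervals below $\ell$ and invoking the elementary inequality $j\,2^{j-1}\le(3^j-1)/2$ recorded just before the lemma. Thus it suffices to produce $P\ni d$ satisfying condition~\ref{item:ak-bound}, and by monotonicity this amounts to bounding the value at each pinch point: $a_{p}=O(c^{3^{p'}2^{\,p-p'-1}})$ for consecutive pinch points $p'<p$.

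The engine is a suffix reduction. Let $j\in\{0,\dots,d-1\}$ be largest such that $\overline{R}_j$ is $c'$-guaranteed, where $c'=c\prod_{i\le j}\binom{a_i}{2}$; such a $j$ exists since $j=0$ works. I claim $\overline{R}_j\in\cO(c',d-j)$. It is $c'$-guaranteed by choice. For minimality, decreasing any coordinate $a_k$ with $k>j$ yields a non-$c$-guaranteed grid (as $R\in\cO(c,d)$), while the prefix $R_j$ stays $c$-guaranteed; the contrapositive of Lemma~\ref{lem:virtual-colors} then forces the corresponding one-coordinate reduction of $\overline{R}_j$ to be non-$c'$-guaranteed. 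I want to stress an asymmetry here: only suffixes can be peeled in this way, since the analogous statement for a prefix would require a single-coordinate decrease to destroy guaranteedness of that prefix, which the minimality of $R$ simply does not provide.

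If $j\ge 1$, the inductive hypothesis applied to $\overline{R}_j\in\cO(c',d-j)$ returns tail pinch points $j+q_1<\dots<j+q_r=d$ with $a_{j+q_s}=O\big((c')^{3^{q_{s-1}}2^{\,q_s-q_{s-1}-1}}\big)$. The crucial bookkeeping is that $c'=O(c^{3^j})$ \emph{precisely} when $\prod_{i\le j}a_i=O(c^{(3^j-1)/2})$, and in that case $(c')^{3^{q_{s-1}}2^{\,q_s-q_{s-1}-1}}=O\big(c^{3^{\,j+q_{s-1}}2^{\,q_s-q_{s-1}-1}}\big)$, which is exactly the bound that condition~\ref{item:ak-bound} demands for $R$ with previous pinch point $j+q_{s-1}$. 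Taking $P=\{j\}\cup\{j+q_1,\dots,j+q_r\}$ together with the pinch points of the prefix $R_j$, the tail conditions then translate verbatim; the factor $3^j$ from the virtual colors is what upgrades the tail's $3^{q_{s-1}}$ to the required $3^{\,j+q_{s-1}}$.

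The main obstacle is the prefix $R_j$. Because the prefix-peel is unavailable, $R_j$ need not be a minimal obstruction, so I cannot recurse on it directly; moreover its coordinates cannot be controlled by passing to a dominated obstruction $S\preceq R_j$, since that bounds $S$'s coordinates from above, not $R_j$'s. Instead I would bound the prefix coordinates straight from the minimality of the full grid: each unit decrease of a coordinate makes $R$ non-$c$-guaranteed, so $\varepsilon_c\ge 1$ there by Lemma~\ref{lem:key-fact}, which pins some coordinate at $O(c^{2^{i-1}})$; iterating this together with the contrapositive of Theorem~\ref{thm:subgrids} should localize the prefix pinch points and show that on the bottom interval each $a_k=O(c^{2^{k-1}})$ (the $p'=0$ case of condition~\ref{item:ak-bound}), which in turn yields the needed claim that $j$ is itself a pinch point, i.e.\ $\prod_{i\le j}a_i=O(c^{(3^j-1)/2})$, closing the recursion. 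Verifying that the two exponent regimes---the factor $3^{p'}$ coming from the accumulated virtual colors and the factor $2^{\,p-p'-1}$ accumulated within a single interval---fit together into condition~\ref{item:ak-bound}'s exponent $3^{p'}2^{\,p-p'-1}$ is where the elementary inequalities $j\,2^{j-1}\le(3^j-1)/2$ and $j\,2^j+1\le 3^j$ are used, and is the delicate part of the argument.
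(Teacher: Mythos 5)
Your suffix-peeling observation is correct and even elegant: with $j$ chosen largest so that $\overline{R}_j$ is $c'$-guaranteed, minimality of $R$ together with the contrapositive of Lemma~\ref{lem:virtual-colors} does place $\overline{R}_j$ in $\cO(c',d-j)$, and your bookkeeping (condition~\ref{item:vol-bound} following from condition~\ref{item:ak-bound}, and $c'=O(c^{3^j})$ upgrading $3^{q_{s-1}}$ to $3^{j+q_{s-1}}$) is accurate \emph{conditional on} $\prod_{i\le j}a_i = O(c^{(3^j-1)/2})$. But the proof has a genuine gap exactly where that conditioning surfaces. First, the case $j=0$ is not a corner case you can wave off: Lemma~\ref{lem:virtual-colors} is only a \emph{sufficient} condition for $R$ to be $c$-guaranteed, so nothing about a minimal obstruction forces any proper suffix to be virtually guaranteed. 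Indeed $[5,5]\in\cO(2,2)$ has $\overline{R}_1=[5]$ and $c'=2\binom{5}{2}=20$, and $[5]$ is certainly not $20$-guaranteed; so $j=0$, your induction has nothing to recurse on, the prefix is empty, and your proposal says nothing in this (typical, near-equilateral) situation. Second, your plan for the prefix rests on a false claim: condition~\ref{item:ak-bound} with $j=0$ does not assert $a_k=O(c^{2^{k-1}})$ but only $a_k=O(c^{2^{\ell-1}})$ with $\ell$ the pinch point \emph{above} $k$, and the stronger per-coordinate statement fails --- again $[5,5]$ (where $a_1=5$ while $c=2$) illustrates the pattern that in a near-equilateral minimal obstruction the first coordinate is comparable to the last, far larger than $c$. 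The step ``iterating this together with the contrapositive of Theorem~\ref{thm:subgrids} should localize the prefix pinch points'' is precisely where the substance of the lemma lives, and it is not carried out.

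For contrast, the paper's proof runs bottom-up and never needs any sub-grid to be minimal, nor even any suffix to be guaranteed. Minimality of $R$ gives only that $R^-$ is not $c$-guaranteed, hence $\varepsilon_c(R^-)\ge 1$ by Lemma~\ref{lem:key-fact}; some term of this sum is $\ge 1/d$, and taking the largest such index $\ell$ yields $a_\ell \le d2^{d-\ell}c^{2^{\ell-1}}+2$. Monotonicity then bounds the whole prefix at once, since $a_k\le a_\ell$ for $k\le\ell$, giving both condition~\ref{item:ak-bound} below $\ell$ and $\prod_{i\le\ell}a_i\le a_\ell^\ell = O(c^{(3^\ell-1)/2})$ via $\ell\cdot 2^{\ell-1}\le(3^\ell-1)/2$; this is the mechanism your per-coordinate claim was trying to substitute for. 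Next, because the decremented coordinate of $R^-$ is a largest one, $R^- = R_\ell\times(\overline{R}_\ell)^-$ up to permutation, and the contrapositive of Lemma~\ref{lem:virtual-colors} shows that the \emph{decremented} suffix $(\overline{R}_\ell)^-$ is not $c'$-guaranteed --- which is all that is needed: apply $\varepsilon_{c'}\ge 1$ to it, obtain the next pinch point $m$ with $a_m = O(c^{3^\ell\cdot 2^{m-\ell-1}})$, and repeat up to $d$. If you reorganize your argument around this invariant (non-guaranteedness of the decremented suffix, rather than guaranteedness or minimality of the suffix itself), both of your obstacles disappear.
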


\begin{proof}
Let $d\ge 1$ and $c\ge 2$ be given, and let $R = [a_1,\ldots,a_d] \in \cO(c,d)$ be a monotone grid.  Then $R$ is $c$-guaranteed, and thus $R_j$ is also $c$-guaranteed for all $1\le j\le d$.  Since $R\in\cO(c,d)$, we have that $R^-$ is not $c$-guaranteed, and thus $\varepsilon_c(R^-) \ge 1$.  This in turn implies that there is some largest $\ell \in [d]$ such that
$$
2^{d-\ell}\frac{c^{2^{\ell-1}}}{a_\ell - 2} \ge \frac{1}{d}.
$$
(Note that the denominator is positive, because $a_\ell \ge a_1 \ge c+1 \ge 3$ since $R$ is $c$-guaranteed.)  Thus,
\begin{equation}\label{eqn:aj-bound}
a_\ell \le d 2^{d-\ell}\cdot c^{2^{\ell-1}} + 2 \le (d+2)2^{d-\ell} \cdot c^{2^{\ell-1}},
\end{equation}
and thus
\begin{equation}\label{eqn:vol-bound}
\prod_{i=1}^\ell a_i \le (a_\ell)^\ell \le \left((d+2)2^{d-\ell}\right)^\ell \cdot c^{\ell\cdot 2^{\ell-1}} \le \left((d+2)2^{d-1}\right)^d \cdot c^{(3^\ell-1)/2},
\end{equation}
which implies that $\ell$ satisfies Condition~\ref{item:vol-bound} of the lemma.  We will make $\ell$ the least element of $P$, noticing that Equation~(\ref{eqn:aj-bound}) and the monotonicity of $R$ imply that $a_k$ satisfies Condition~\ref{item:ak-bound} of the lemma for all $k\in[\ell]$ (with $j = 0$).

If $\ell = d$, then we let $P = \{\ell\} = \{d\}$ and we are done.

Otherwise, $\ell < d$.  Note that $R^- = R_\ell\times (\overline{R}_\ell)^-$ up to a possible permutation of the coordinates.  Recall also that $R_\ell$ is $c$-guaranteed, but $R^-$ is not.  It follows from Lemma~\ref{lem:virtual-colors} that $(\overline{R}_\ell)^-$ is not $c^\prime$-guaranteed, where
$$
c^\prime := c\cdot \prod_{i=1}^\ell \binom{a_i}{2} = O\left(c\cdot\left(\prod_{i=1}^\ell a_i\right)^2\right).
$$
The bound in Equation~(\ref{eqn:vol-bound}) gives $c^\prime = O\left(c^{3^\ell}\right)$.

We thus have $\varepsilon_{c^\prime}((\overline{R}_\ell)^-) \ge 1$, and so there is some largest $m$ with $\ell < m \le d$ such that
$$
2^{d-m}\frac{(c^\prime)^{2^{m-\ell-1}}}{a_m - 2} \ge \frac{1}{d-\ell},
$$
which gives
\begin{eqnarray}\label{eqn:am-bound-first}
a_m & \le & (d-\ell) 2^{d-m}\cdot (c^\prime)^{2^{m-\ell-1}} + 2 \\
& \le & (d-\ell+2)2^{d-m} \cdot (c^\prime)^{2^{m-\ell-1}} \\
& = & O\left(c^{3^\ell\cdot 2^{m-\ell-1}}\right).
\label{eqn:am-bound-last}
\end{eqnarray}
For the volume of $R_m$, we get
\begin{eqnarray*}
\prod_{i=1}^m a_i & = & \prod_{i=1}^\ell a_i \cdot \prod_{i=\ell+1}^m a_i \\
& \le & \left(\prod_{i=1}^\ell a_i\right) \cdot (a_m)^{m-\ell} \\
& = & O\left(c^{(3^\ell-1)/2}\right)\cdot O\left(c^{3^\ell\cdot (m-\ell)\cdot 2^{m-\ell-1}}\right) \\
& = & O\left(c^{(3^\ell-1)/2}\cdot c^{3^\ell\cdot (3^{m-\ell}-1)/2}\right) \\
& = & O\left(c^{(3^m-1)/2}\right).
\end{eqnarray*}
We make $\ell$ and $m$ the two least elements of $P$, and the last calculation shows that $m\in P$ satisfies Condition~\ref{item:vol-bound}.  Further, since $a_k \le a_m$ for all $k$ such that $\ell < k \le m$, Condition~\ref{item:ak-bound} is also satified for all these $a_k$ by Equations~(\ref{eqn:am-bound-first})--(\ref{eqn:am-bound-last}).

If $m = d$, then we let $P = \{\ell,m\}$ and we are done.  Otherwise, we repeat the argument above using $m$ instead of $\ell$ to obtain an $n$ with $m < n \le d$ such that $\ell$, $m$, and $n$ being the least three elements of $P$ satisfies Conditions~\ref{item:vol-bound} and \ref{item:ak-bound} of the lemma, and so on until we arrive at $d$, whence we set $P := \{\ell,m,n,\ldots,d\}$.
\end{proof}

The next proposition shows that the bounds in Lemma~\ref{lem:pinch-points} are asymptotically tight.

\begin{proposition}
For $c\ge 2$, there is an infinite sequence $\{\mu_j(c)\}_{j=1}^\infty$ of positive integers such that
\begin{enumerate}
\item\label{item:lb}
$\mu_j(c) \ge 1 + 2^{(1-3^{j-1})/2}\cdot c^{3^{j-1}}$ for all $j\in \posints$, and
\item\label{item:os}
for all $d\ge 1$, the grid $[\mu_1(c),\ldots,\mu_d(c)] \in \cO(c,d)$  with pinch point set $P = [d]$.
\end{enumerate}
\end{proposition}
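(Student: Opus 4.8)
The plan is to define the sequence recursively by
\[
\mu_1(c) := c+1, \qquad \mu_j(c) := c\prod_{i=1}^{j-1}\binom{\mu_i(c)}{2} + 1 \quad (j\ge 2),
\]
and to prove by a single induction on $d$ that the truncation $R_d := [\mu_1,\ldots,\mu_d]$ lies in $\cO(c,d)$ with pinch point set $[d]$, carrying along an auxiliary ``uniqueness'' property that drives everything. The organizing device I would record first is the layer/signature picture: writing a grid as $S\times[t]$ with $S=[\mu_1,\ldots,\mu_{d-1}]$, a $c$-coloring of $S\times[t]$ is a list $f_1,\ldots,f_t$ of colorings of $S$, and a monochromatic $d$-box is exactly a choice of two layers $x\ne y$ together with a box $B$ of $S$ and a color $s$ monochromatic in both $f_x$ and $f_y$. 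Hence, writing $\mathrm{sig}(g):=\{(B,s): g|_B\equiv s\}$, the grid $S\times[t]$ is $c$-colorable iff $S$ admits $t$ colorings with pairwise disjoint signatures. That $R_d$ is $c$-guaranteed is then immediate from Lemma~\ref{lem:virtual-colors} with $j=d-1$: setting $c':=c\prod_{i=1}^{d-1}\binom{\mu_i}{2}=\mu_d-1$, the one-dimensional grid $\overline{R}_{d-1}=[\mu_d]$ is $c'$-guaranteed because $\mu_d=c'+1>c'$.

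The heart of the argument is the inductive claim $(\star_d)$: \emph{for every $d$-box $B\subseteq R_d$ and every color $s$ there is a $c$-coloring of $R_d$ whose unique monochromatic box is $B$, colored $s$.} The base case $(\star_1)$ is trivial. For the step, given $(\star_{d-1})$ I would realize a prescribed $B=B_0\times\{x,y\}$ by putting on layers $x$ and $y$ colorings of $R_{d-1}$ with unique monochromatic box $(B_0,s)$, and on the remaining $\mu_d-2=c'-1$ layers colorings realizing, one apiece, the $c'-1$ remaining pairs $(B',s')\ne(B_0,s)$; then $(B_0,s)$ is the only pair lying in two layers, so $B_0\times\{x,y\}$ is the only monochromatic $d$-box.

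Property $(\star)$ yields minimality in two complementary ways. For the reduction $R_d^{(d)}$ of the last coordinate we have $R_d^{(d)}=R_{d-1}\times[c']$; since the signature universe has exactly $c'$ elements and $R_{d-1}$ is $c$-guaranteed (so all signatures are nonempty), any $c'$ pairwise disjoint signatures must be singletons, and $(\star_{d-1})$ furnishes precisely the $c'$ colorings needed, so $R_d^{(d)}$ is colorable. For a reduction $R_d^{(k)}$ of an earlier coordinate $k<d$ I would stack a single coloring instead: by the inductive hypothesis $R_{d-1}$ is minimal, so $R_{d-1}^{(k)}$ is colorable, i.e. has a coloring of empty signature, and placing it on all $\mu_d$ layers of $R_d^{(k)}=R_{d-1}^{(k)}\times[\mu_d]$ produces no monochromatic box. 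Combined with the standard equivalence ``a monotone grid is minimal iff every single-coordinate reduction is colorable'' (valid because the $c$-guaranteed grids form an up-set), this gives $R_d\in\cO(c,d)$; monotonicity $\mu_1<\mu_2<\cdots$ follows from $\mu_{j+1}-1=(\mu_j-1)\binom{\mu_j}{2}\ge\mu_j-1$.

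It remains to verify the quantitative conditions. For Condition~\ref{item:lb} I would show $\mu_j-1\ge 2^{(1-3^{j-1})/2}c^{3^{j-1}}$ by induction, using $\binom{\mu_i}{2}\ge(\mu_i-1)^2/2\ge 2^{-3^{i-1}}c^{2\cdot3^{i-1}}$ and $\sum_{i=1}^{j-1}3^{i-1}=(3^{j-1}-1)/2$; the exponents of $c$ and of $2$ telescope to exactly the claimed constant. The matching upper bounds $\mu_k=O(c^{3^{k-1}})$ and $\prod_{i\le\ell}\mu_i=O(c^{(3^\ell-1)/2})$ come from the identical computation with the inequalities reversed, and these are exactly Conditions~\ref{item:vol-bound} and~\ref{item:ak-bound} of Lemma~\ref{lem:pinch-points} for $P=[d]$, since there $\ell=k$ and $j=k-1$, so the exponent $3^{j}\cdot2^{\ell-j-1}$ collapses to $3^{k-1}$. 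The main obstacle is the minimality argument, and specifically isolating the right inductive invariant: property $(\star)$ is strictly stronger than membership in $\cO(c,d)$, yet it is precisely what is needed to assemble the disjoint singleton signatures one dimension up. Merely knowing that $R_{d-1}$ is minimal would let us handle the earlier coordinates of $R_d$ but would be too weak to force $R_d^{(d)}$ colorable.
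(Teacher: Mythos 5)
Your proposal is correct and follows essentially the same route as the paper: the same recursive definition of $\mu_j$, the same use of Lemma~\ref{lem:virtual-colors} with $j=d-1$ for guaranteedness, and the same core construction (overlaying $c^\prime$ colorings with distinct singleton signatures and duplicating one layer) for minimality. Your invariant $(\star_d)$ is just the paper's ``minimal coloring'' invariant with the hyperplane/color-permutation symmetry baked in, and your per-coordinate colorability arguments are equivalent to the paper's ``remove a hyperplane, then project along the prefix'' step.
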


\begin{proof}
For all $c \ge 2$, define
\begin{eqnarray*}
\mu_1(c) & := & 1 + c, \\
\mu_2(c) & := & 1 + c\cdot\binom{c+1}{2}, \\
& \vdots & \\
\mu_{j+1}(c) & := & 1 + c\cdot \prod_{i=1}^j \binom{\mu_i(c)}{2}, \\
& \vdots &
\end{eqnarray*}
Fix $c\ge 2$ and let $\mu_j$ denote $\mu_j(c)$ for short.  A routine induction on $j$ shows (\ref{item:lb}).  For the inductive step, noting that $\sum_{i=0}^{j-1} 3^i = (3^j - 1)/2$, we have
\begin{align*}
\mu_{j+1} &= 1 + c\cdot \prod_{i=1}^j \binom{\mu_i}{2} \\
&\ge 1 + \frac{c}{2^j} \prod_{i=1}^j (\mu_i - 1)^2 \\
&\ge 1 + \frac{c}{2^j}\prod_{i=1}^j \frac{c^{2\cdot 3^{i-1}}}{2^{3^{i-1}-1}} \\
& = 1 + \frac{c^{3^j}}{2^{(3^j-1)/2}}.
\end{align*}
For (\ref{item:os}), we use induction on $d\ge 1$ to show separately that
\begin{enumerate}
\item
$[\mu_1,\ldots,\mu_d]$ is $c$-guaranteed, and
\item\label{item:not-c-two-guar}
$[\mu_1,\ldots,\mu_d]$ is not $(c,2)$-guaranteed (i.e., there is a coloring $[\mu_1,\ldots,\mu_d]\rightarrow [c]$ that monocolors exactly one box).
\end{enumerate}
Clearly $[\mu_1] = [1+c]$ is $c$-guaranteed by the Pigeonhole Principle.  Now let $d \ge 2$ and assume that $[\mu_1,\ldots,\mu_{d-1}]$ is $c$-guaranteed.  Then letting $c^\prime = c\cdot\prod_{i=1}^{d-1} \binom{\mu_i}{2}$, we have $\mu_d = 1+c^\prime$, and hence $[\mu_d]$ is $c^\prime$-guaranteed.  But then, $[\mu_1,\ldots,\mu_d]$ is $c$-guaranteed by Lemma~\ref{lem:virtual-colors} (letting $ j = d-1$).

Now for claim (\ref{item:not-c-two-guar}).  For $d=1$, clearly the coloring $[\mu_1]\rightarrow [c]$ mapping $j\mapsto (j\bmod c) + 1$ has exactly one monochromatic $1$-dimensional box, namely, $(1;c) = \{1,c+1\}$.  Now let $d\ge 2$ and assume claim (\ref{item:not-c-two-guar}) holds for $d-1$, i.e., there is a coloring $[\mu_1,\ldots,\mu_{d-1}]\rightarrow [c]$ that monocolors exactly one box.  We will call such a coloring \emph{minimal}.  This generates exactly $\prod_{i=1}^{d-1} \binom{\mu_i}{2}$ many boxes in $[\mu_1,\ldots,\mu_{d-1}]$.  For each of these boxes $B$ and for each color $s$, we can find a minimal coloring that monocolors $B$ with $s$ by permuting the order of the hyperplanes along each axis and by permuting the colors.  Thus there are exactly $c^\prime = c\cdot\prod_{i=1}^{d-1} \binom{\mu_i}{2}$ many distinct minimal colorings.  We overlay these $c^\prime$ many colorings to obtain a coloring of $[\mu_1,\ldots,\mu_{d-1},c^\prime]$ with no monochromatic $d$-boxes.  We then duplicate the first $(d-1)$-dimensional layer to arrive at a $c$-coloring of $[\mu_1,\ldots,\mu_{d-1},1+c^\prime] = [\mu_1,\ldots,\mu_d]$.  This coloring has only one monocolored $d$-box: the box corresponding to the duplicated layer of unique monocolored $(d-1)$-boxes.  This shows Item~(\ref{item:not-c-two-guar}).

It follows from claim (\ref{item:not-c-two-guar}) that $[\mu_1,\ldots,\mu_{d-1},\mu_d-1]$ is not $c$-guaranteed for any $d\ge 1$, since we can remove a single hyperplane from the only monocolored $d$-box in some minimal coloring of $[\mu_1,\ldots,\mu_{d-1},\mu_d]$ to leave a coloring of $[\mu_1,\ldots,\mu_{d-1},\mu_d-1]$ without any monochromatic $(d-1)$-boxes.  From this it easily follows that $[\mu_1,\ldots,\mu_d]\in\cO(c,d)$, because $[\mu_1,\ldots,\mu_{j-1},\mu_j-1]$ is not $c$-guaranteed, and hence $[\mu_1\ldots,\mu_{j-1},\mu_j-1,\mu_{j+1},\ldots,\mu_d]$ is not $c$-guaranteed, for any $j\in [d]$.

Finally, it is evident that all $j\in[d]$ are pinch points for $[\mu_1,\ldots,\mu_d]$.  (It is interesting to note that $[\mu_1,\ldots,\mu_d]$ is the lexicographically first element of $\cO(c,d)$.)
\end{proof}

\section{Upper bound on the size of the obstruction set} \label{sec:Ocdsize}

It was shown in \cite{FGGP} that $|\cO(c,2)| \le 2c^2$.  We give an asymptotic upper bound for $|\cO(c,d)|$ for every fixed $d\ge 3$.

\begin{theorem}
For all $d \ge 3$,
$$
|\cO(c,d)| = O\left(c^{(17\cdot 3^{d-3} - 1)/2}\right).
$$
\end{theorem}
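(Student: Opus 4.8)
The plan is to reduce the counting problem to a finite optimization over the pinch-point sets supplied by Lemma~\ref{lem:pinch-points}, after first observing that a monotone obstruction grid is determined by its first $d-1$ coordinates. I would begin by showing that the map sending a monotone $R=[a_1,\ldots,a_d]\in\cO(c,d)$ to the truncation $(a_1,\ldots,a_{d-1})$ is injective: if $R=[a_1,\ldots,a_{d-1},a_d]$ and $R'=[a_1,\ldots,a_{d-1},a_d']$ are both monotone members of $\cO(c,d)$, then since both last coordinates are $\ge a_{d-1}$, one of $R,R'$ is $\preceq$ the other, and as $\cO(c,d)$ is an antichain we must have $R=R'$. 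Hence $|\cO(c,d)|$ is at most the number of tuples $(a_1,\ldots,a_{d-1})$ arising as such truncations. For each $R$, Lemma~\ref{lem:pinch-points} furnishes a pinch-point set $P\ni d$ together with bounds $a_k=O(c^{e_k(P)})$; writing $P=\{p_1<\cdots<p_r=d\}$ and $p_0=0$, every $k$ in the block $(p_{s-1},p_s]$ has $e_k(P)=3^{p_{s-1}}2^{p_s-p_{s-1}-1}$. Summing over the at most $2^{d-1}$ (hence $O(1)$, for fixed $d$) choices of $P$ gives
$$
|\cO(c,d)| = O\!\left(c^{\,E(d)}\right),\qquad E(d):=\max_{P}\ \sum_{k=1}^{d-1} e_k(P),
$$
and it remains only to evaluate $E(d)$.

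With $g_s:=p_s-p_{s-1}$, the indices $1,\ldots,d-1$ consist of the full blocks $1,\ldots,r-1$ together with all but the top index of the last block, so for a fixed $P$,
$$
\sum_{k=1}^{d-1}e_k(P)=\sum_{s=1}^{r-1} g_s\,3^{p_{s-1}}2^{g_s-1}+(g_r-1)\,3^{p_{r-1}}2^{g_r-1}.
$$
For the interior sum I would apply the elementary fact $g\cdot 2^{g}+1\le 3^{g}$ recorded before Lemma~\ref{lem:virtual-colors}, in the equivalent form $g\,2^{g-1}\le(3^{g}-1)/2$, to each block. Because $p_{s-1}+g_s=p_s$, the resulting per-block bounds telescope:
$$
\sum_{s=1}^{r-1} g_s\,3^{p_{s-1}}2^{g_s-1}\le \tfrac12\sum_{s=1}^{r-1}\bigl(3^{p_s}-3^{p_{s-1}}\bigr)=\tfrac12\bigl(3^{q}-1\bigr),\qquad q:=p_{r-1}=d-g_r.
$$

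Writing $g:=g_r$, this collapses the whole problem to maximizing the one-variable function
$$
h(g):=\tfrac12\bigl(3^{d-g}-1\bigr)+(g-1)\,2^{g-1}\,3^{d-g}=\tfrac12\Bigl(3^{d-3}\,\psi(g)-1\Bigr),\qquad \psi(g):=3^{\,3-g}\bigl(1+(g-1)2^{g}\bigr),
$$
over $1\le g\le d$. A direct check of the values $\psi(1)=9,\ \psi(2)=15,\ \psi(3)=17,\ \psi(4)=49/3,\ldots$ shows $\psi$ is maximized uniquely at $g=3$, so $h(g)\le h(3)=(17\cdot 3^{d-3}-1)/2$ for all admissible $g$. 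Therefore $E(d)\le(17\cdot 3^{d-3}-1)/2$, which yields the theorem. (This bound is attained, taking $g_r=3$ and singleton interior blocks, for which every inequality above is tight.)

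I expect the main obstacle to be the optimization: recognizing that the last block should have size exactly $3$, which is precisely where the constant $17=1+2\cdot 2^{3}$ enters, and checking that no other composition of $[d]$ into pinch-point blocks does better. The telescoping reduction via $g2^{g}+1\le 3^{g}$ is what makes this tractable, since it collapses all interior blocks into the single clean term $(3^{q}-1)/2$ regardless of how they are chosen; the remaining maximization of $h(g)$ is then routine. A secondary point requiring care is the bookkeeping of the injective truncation together with the $O(1)$-fold sum over pinch-point sets, so that the implied $d$-dependent constants are correctly absorbed into the $O(\cdot)$ notation.
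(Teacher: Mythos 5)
Your proposal is correct and follows essentially the same route as the paper's proof: the truncation to the first $d-1$ coordinates (which the paper phrases as ``at most one value of $a_d$,'' via comparability in $\preceq$ and the antichain property), the per-coordinate pinch-point bounds of Lemma~\ref{lem:pinch-points}, the telescoping estimate via $j\,2^{j-1}\le(3^j-1)/2$, and the final one-variable maximization showing the last block size $3$ is optimal, which is exactly where $17 = 1 + 2\cdot 2^3$ arises. The only nit is that $\cO(c,d)$ contains non-monotone grids as well, so between $|\cO(c,d)|$ and your count of monotone truncations there is a factor of at most $d!$ --- harmless, and absorbed by the paper's $d$-dependent $O(\cdot)$ convention, as you anticipate in your closing remark on bookkeeping.
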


\begin{proof}
Fix $d \ge 3$.  We give an asymptotic upper bound on the number of monotone grids in $\cO(c,d)$.  The size of $\cO(c,d)$ is at most $d!$ times this bound, and so it is asymptotically equivalent.  By Lemma~\ref{lem:pinch-points}, every grid $R\in\cO(c,d)$ has a set $P$ of pinch points.  For each set $P\subseteq [d]$ such that $d\in P$, let $\#_c(P)$ be the number of monotone grids in $\cO(c,d)$ having pinch point set $P$.  There are $2^{d-1}$ many such $P$, so an asymptotic bound on $\max\{\#_c(P) \mid P\subseteq[d]\mathrel{\wedge} d\in P \}$ gives the same asymptotic bound on $|\cO(c,d)|$.

Fix a set $P\subseteq[d]$ such that $d\in P$, and let $P = \{\ell_1 < \ell_2 < \cdots < \ell_s = d\}$, where $s = |P|$ and $\ell_1,\ldots,\ell_s$ are the elements of $P$ in increasing order.  For convenience, set $\ell_0 := 0$.  Lemma~\ref{lem:pinch-points} says that for any monotone grid $R = [a_1,\ldots,a_d]\in\cO(c,d)$ having pinch point set $P$, for any $b\in [s]$, and for any $k$ such that $\ell_{b-1} < k \le \ell_b$, we have $a_k = O\left(c^{e(b)}\right)$, where
$$
e(b) := 3^{\ell_{b-1}}\cdot 2^{\ell_b - \ell_{b-1} - 1}.
$$
To bound $\#_c(P)$, we first note that for any choice of $1\le a_1\le \cdots \le a_{d-1}$, there can be at most one value of $a_d$ such that $[a_1,\ldots,a_d]\in\cO(c,d)$, because any two $d$-dimensional grids that share the first $d-1$ dimensions are comparable in the dominance order $\preceq$.  Thus $\#_c(P)$ is bounded by the number of possible combinations of values of $a_1,\ldots,a_{d-1}$.  From the bound on each $a_k$ above, we therefore have
\begin{align*}
\#_c(P) & \le \left(\prod_{b=1}^{s-1} \; \prod_{k=\ell_{b-1}+1}^{\ell_b} O\left( c^{e(b)} \right)\right) \cdot \prod_{k=\ell_{s-1}+1}^{d-1} O\left(c^{e(s)}\right) \\
& = O\left(\prod_{b=1}^{s-1} \left(c^{e(b)}\right)^{\ell_b - \ell_{b-1}} \right)\cdot O\left(\left(c^{e(s)}\right)^{d-1-\ell_{s-1}}\right) \\
& = O\left( c^{h_1+h_2}\right)
\end{align*}
where $h_2 = e(s)(d-1-\ell_{s-1})$ and
\begin{align*}
h_1 & = \sum_{b=1}^{s-1} e(b) (\ell_b-\ell_{b-1}) \\
& = \sum_{b=1}^{s-1} 3^{\ell_{b-1}}\cdot 2^{\ell_b - \ell_{b-1} - 1} \cdot(\ell_b-\ell_{b-1}) \\
& \le \sum_{b=1}^{s-1} 3^{\ell_{b-1}}\cdot \frac{3^{\ell_b - \ell_{b-1}} - 1}{2} \\
& = \frac{1}{2} \sum_{b=1}^{s-1} \left(3^{\ell_b} - 3^{\ell_{b-1}}\right) \\
& = \frac{3^m - 1}{2},
\end{align*}
where $m = \ell_{s-1}$.  We also have
\begin{align*}
h_2 &= 3^{\ell_{s-1}}\cdot 2^{d - \ell_{s-1} - 1}\cdot (d - 1 - \ell_{s-1}) \\
&= 3^m \cdot 2^{d-m-1} \cdot (d-m-1),
\end{align*}
whence
$$
h_1 + h_2 = \frac{3^m - 1}{2} + 3^m \cdot 2^{d-m-1} \cdot (d-m-1).
$$
So our bound on the exponent of $c$ only depends on the value of $m$, which satisfies $0 \le m < d$.  It is more convenient to express $h_1+h_2$ in terms of $n := d-m$, where $n\in[d]$:
\begin{align*}
h_1+h_2 &= \frac{3^{d-n} - 1}{2} + 3^{d-n} \cdot 2^{n-1} \cdot (n-1) \\
&= \frac{3^d}{2} \cdot\frac{1 + 2^n(n-1)}{3^n} - \frac{1}{2}.
\end{align*}
It is easy to check that $(1+2^n(n-1))/3^n$ is greatest (and thus $h_1+h_2$ is greatest) when $n = 3$.  It follows that
\begin{align*}
h_1 + h_2 &\leq \frac{3^d}{2} \cdot \frac{1 + 2^3(3-1)}{3^3} - \frac{1}{2} \\
&= \frac{17\cdot 3^{d-3} - 1}{2},
\end{align*}
which proves the theorem.
\end{proof}

The first few values $(17\cdot 3^{d-3} - 1)/2$ are given in the Figure \ref{fig:exponenttable}.

\begin{figure}[htp]
\centering
\begin{tabular}{r|r}
$d$ & $(17\cdot 3^{d-3} - 1)/2$ \\ \hline
3 &  8 \\
4 & 25 \\
5 & 76 \\
6 & 229 \\
\end{tabular}
\caption{Table of upper bounds on $e$ so that $|\cO(c,d)| = O(c^e)$ for small $d$.} \label{fig:exponenttable}
\end{figure}

\section{Three Dimensions and Two Colors} \label{sec:threeD}

The following graph (Figure \ref{fig:surface}, generated using the Jmol module in SAGE) and table (Figure \ref{fig:a3table}) display upper bounds for the smallest $a_3$ so that $[a_1,a_2,a_3]$ is $2$-guaranteed.  All three graphical axes run from 3 to 130; the table includes only $3 \leq a_1 \leq 12$ and $3 \leq a_2 \leq 12$.  We believe these values to be very close to the truth; indeed, we have matching lower bounds in many cases, and lower bounds that differ from the upper bounds by at most 2 in many more cases.

\begin{figure}[htp]
\centering
\includegraphics[scale = .25]{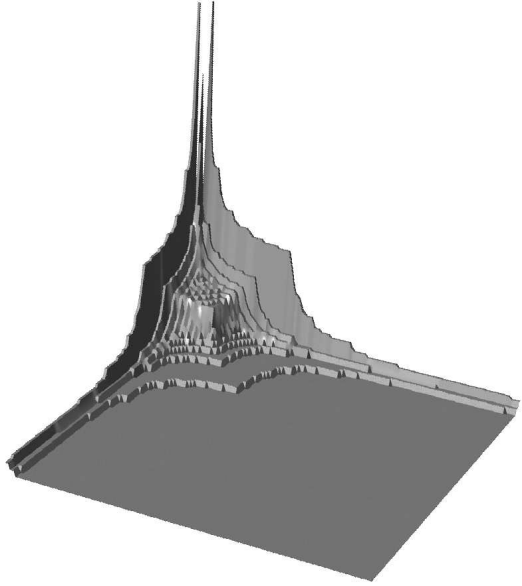}
\caption{Graph of upper bounds on $a_3$ so that $[a_1,a_2,a_3]$ is $2$-guaranteed.} \label{fig:surface}
\end{figure}

\begin{figure}[htp]
\centering
\begin{tabular}{l||l|l|l|l|l|l|l|l|l|l|}
  & 3 & 4 & 5 & 6 & 7  & 8  & 9 & 10 & 11 & 12 \\
\hline
\hline
3 &   &   &   &   & 127& 85 & 73& 68 & 67 & 67 \\
\hline
4 &   &   &   &   & 127& 85 & 73& 68 & 67 & 67\\
\hline
5 &   &   &101& 76& 53 & 47 & 46& 46 & 40 & 37\\
\hline
6 &   &   & 76& 76& 53 & 47 & 46& 46 & 40 & 37\\
\hline
7 &127&127& 53& 53& 53 & 46 & 40& 37 & 34 & 33\\
\hline
8 &85 & 85& 47& 47& 46 & 45 & 40& 37 & 34 & 33\\
\hline
9 &73 & 73& 46& 46& 40 & 40 & 37& 34 & 31 & 30\\
\hline
10&68 & 68& 46& 46& 37 & 37 & 34& 33 & 31 & 30\\
\hline
11&67 & 67& 40& 40& 34 & 34 & 31& 31 & 30 & 28\\
\hline
12&67 & 67& 37& 37& 33 & 33 & 30& 30 & 28 & 28\\
\hline
\end{tabular}
\caption{Table of bounds on $a_3$ so that $[a_1,a_2,a_3]$ is $2$-guaranteed.} \label{fig:a3table}
\end{figure}

A few different methods were applied to obtain these bounds.  First, the values $\Delta_j$, as in Section 3, were computed, and the least $a_3$ so that $\Delta_3 > 0$ was recorded.  In fact, this idea was improved slightly by applying the observation that, if some grid is $(2,t)$-guaranteed, then it is $(2,\ceil{t})$-guaranteed.  In some cases, this increases the value of $\Delta_j$.  Second, we used the simple observations that $c$-colorability is independent of the order of the $a_i$, and that $R \preceq R^\prime$ when $R$ is $c$-guaranteed implies that $R^\prime$ is $c$-guaranteed.  Third, we applied the following lemma.

\begin{lemma} If the grid $R = [a_1,\ldots,a_d]$ is $(c,t)$-guaranteed, then $R \times [\floor{cM/t}+1]$ is $c$-guaranteed, where $M = \prod_{j=1}^d \binom{a_j}{2}$
\end{lemma}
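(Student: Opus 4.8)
The plan is to mimic the ``virtual colors'' argument of Lemma~\ref{lem:virtual-colors}, but to exploit the quantitative $(c,t)$-guarantee through a pigeonhole count rather than a fresh coloring. Write $n := \floor{cM/t}+1$ and fix an arbitrary coloring $f : R\times[n] \rightarrow [c]$; I must produce a monochromatic $(d+1)$-dimensional box. For each value $v\in[n]$ of the last coordinate, let $f_v : R \rightarrow [c]$ be the induced coloring $f_v(x_1,\ldots,x_d) = f(x_1,\ldots,x_d,v)$ of the $d$-dimensional layer at height $v$.

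The key observation is the correspondence between monochromatic $(d+1)$-boxes and repeated ``box--color'' pairs across layers. Every $(d+1)$-box in $R\times[n]$ has the form $B\times\{p,q\}$ with $B$ a $d$-box of $R$ and $p\neq q$ in $[n]$, and it is monochromatic under $f$ exactly when $f_p|_B$ and $f_q|_B$ are both constantly equal to one common color $s$. So I record, for each layer $v$, the set
$$
S_v := \{(B,s) : B\text{ is a box of }R,\ s\in[c],\ f_v|_B\equiv s\}
$$
of monochromatic box--color pairs witnessed in that layer. Because $R$ is $(c,t)$-guaranteed, each $f_v$ monocolors at least $t$ boxes, so $|S_v|\ge t$ for every $v\in[n]$. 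All the pairs $(B,s)$ lie in a universe of size exactly $cM$ (there are $M$ boxes in $R$ and $c$ colors), and a monochromatic $(d+1)$-box exists if and only if some pair $(B,s)$ lies in $S_p\cap S_q$ for two distinct heights $p\neq q$.

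The proof then concludes by double counting. Suppose, for contradiction, that $f$ admits no monochromatic $(d+1)$-box. Then no pair is repeated, i.e., the sets $S_1,\ldots,S_n$ are pairwise disjoint, so
$$
cM \ \ge\ \Bigl|\,\bigcup_{v=1}^n S_v\,\Bigr| \ =\ \sum_{v=1}^n |S_v| \ \ge\ nt.
$$
But $n = \floor{cM/t}+1 > cM/t$ forces $nt > cM$, a contradiction. Hence $f$ has a monochromatic $(d+1)$-box, and since $f$ was arbitrary, $R\times[n]$ is $c$-guaranteed.

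There is no serious obstacle here: the only point requiring care is the correspondence of the second paragraph, namely checking that a monochromatic $(d+1)$-box is precisely a box--color pair shared by two layers, and confirming that the universe of such pairs has size $cM$ (and not, say, something weighted by pairs of heights). Once that bookkeeping is pinned down, the elementary fact $\floor{x}+1>x$ delivers the strict inequality $nt>cM$ that the disjointness of the $S_v$ contradicts.
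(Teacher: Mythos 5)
Your proof is correct and follows essentially the same route as the paper's: both view $R\times[n]$ as $n$ layers of $R$, use the $(c,t)$-guarantee to collect at least $nt > cM$ monochromatic (box, color) pairs, and pigeonhole over the $cM$ possible pairs to find one shared by two layers, yielding a monochromatic $(d+1)$-dimensional box. Your contrapositive phrasing via disjointness of the sets $S_v$ is just a cosmetic repackaging of the paper's direct counting argument.
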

\begin{proof} Note that $K = \floor{cM/t}+1 > cM/t$ and is integral.  If we think of $R \times [K]$ as $K$ copies of $R$, then any $c$-coloring of $R \times [K]$ restricts to $K$ $c$-colorings of $R$.  Since $R$ is $(c,t)$-guaranteed, each of these $c$-colorings gives rise to $t$ monochromatic boxes. Hence, in $K$ colorings, there are at least $t (\floor{cM/t}+1) > cM$ monochromatic boxes.  Since there are only $M$ total boxes in each copy of $R$, and any monochromatic box can only be colored in $c$ different ways, there must be two identical boxes (in two different copies of $R$) which are monochromatic and have the same color.  This is precisely a monochromatic $(d+1)$-dimensional box in $R \times [K]$.
\end{proof}

Therefore, in order to obtain upper bounds on $[a_3]$ in the above table, we need to know the greatest $t$ for which $[a_1]\times[a_2]$ is $(2,t)$-guaranteed.  To that end, we define the following matrix:

\begin{definition} Let $M_r$ be the $2^r \times 2^r$ integer matrix whose rows and columns are indexed by all maps $f_j : [r] \rightarrow [2]$, $0 \leq j < 2^r$.  The $(i,j)$-entry of $M_r$ is defined to be
$$
\binom{|f_i^{-1}(1) \cap f_j^{-1}(1)|}{2} + \binom{|f_i^{-1}(2) \cap f_j^{-1}(2)|}{2}.
$$
Then define the quadratic form $Q_r : \R^{2^r} \rightarrow \R$ by $Q_r(\bv) = \bv^\ast M_r \bv$.  Let $\delta_r = (M_r(1,1),\ldots,M_r(2^r,2^r))$, the diagonal of $M_r$.
\end{definition}

\begin{proposition} Let $t$ be the least value of $Q_r(\bv)-\bv \cdot \delta_r$ over all nonnegative integer vectors $\bv \in \Z^{2^r}$ with $\bv \cdot \1 = s$.  Then $[r] \times [s]$ is $(c,t)$-guaranteed, and $t$ is the minimum value so that this is the case.
\end{proposition}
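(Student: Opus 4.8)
The plan is to set up a bijection between $c$-colorings of the grid $[r]\times[s]$ and nonnegative integer vectors $\bv\in\Z^{2^r}$ with $\bv\cdot\1 = s$, and then to show that the number of monochromatic $2$-dimensional boxes under a coloring equals $Q_r(\bv)-\bv\cdot\delta_r$ (for $c=2$). The key observation is that a coloring of $[r]\times[s]$ is determined by assigning to each of the $s$ columns (indexed by the second coordinate) one of the $2^r$ possible maps $f_j:[r]\to[2]$ coloring that column; so if $v_j$ counts how many columns receive the map $f_j$, then $\bv$ is a nonnegative integer vector summing to $s$, and every such $\bv$ arises from a coloring. This establishes the correspondence asserted, so that minimizing over colorings is the same as minimizing over admissible $\bv$.

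Next I would count monochromatic boxes in terms of $\bv$. A $2$-dimensional box consists of two rows $\{x_1,x_1+s_1\}$ and two columns $\{x_2,x_2+s_2\}$, and it is monochromatic exactly when all four chosen points share a color. Fixing two columns that received maps $f_i$ and $f_j$, the pairs of rows on which both columns agree in color $1$ number $\binom{|f_i^{-1}(1)\cap f_j^{-1}(1)|}{2}$, and similarly $\binom{|f_i^{-1}(2)\cap f_j^{-1}(2)|}{2}$ pairs agree in color $2$; summing gives exactly the $(i,j)$-entry of $M_r$. I would then sum over all unordered pairs of columns. The off-diagonal contribution, counting pairs of \emph{distinct} columns with maps $f_i,f_j$, gives $\sum_{i\ne j} v_iv_j M_r(i,j)$, while two columns carrying the \emph{same} map $f_i$ (there are $\binom{v_i}{2}$ such pairs) each contribute $M_r(i,i)$. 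The heart of the computation is reconciling this with the quadratic form: since $Q_r(\bv)=\sum_{i,j} v_iv_j M_r(i,j)$ counts ordered pairs with multiplicity, I need the identity
$$
\sum_{i<j} v_iv_j M_r(i,j) + \sum_i \binom{v_i}{2} M_r(i,i) = \frac{1}{2}\bigl(Q_r(\bv) - \bv\cdot\delta_r\bigr),
$$
which follows from $\binom{v_i}{2} = (v_i^2-v_i)/2$ together with $M_r(i,i)=\delta_r(i)$. A factor-of-two bookkeeping check (each \emph{unordered} pair of columns is counted once) then yields that the total number of monochromatic boxes is exactly $Q_r(\bv)-\bv\cdot\delta_r$.

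The main obstacle I anticipate is the careful diagonal-versus-off-diagonal accounting: I must verify that two columns assigned the identical map $f_i$ yield precisely $M_r(i,i)$ monochromatic boxes (which amounts to $\binom{|f_i^{-1}(1)|}{2}+\binom{|f_i^{-1}(2)|}{2}$, counting pairs of rows on which the single column is constant in color), and confirm that the halving in the quadratic form exactly cancels against the overcounting of ordered pairs, leaving the stated expression with no stray factor. Once this count is established, the proposition's two claims follow at once: taking the minimum of $Q_r(\bv)-\bv\cdot\delta_r$ over admissible $\bv$ gives the largest $t$ for which \emph{every} coloring (equivalently every admissible $\bv$) produces at least $t$ monochromatic boxes, so $[r]\times[s]$ is $(c,t)$-guaranteed; and since this minimum is attained by some $\bv$ and hence by some actual coloring, no larger value of $t$ works, giving minimality.
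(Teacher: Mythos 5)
Your approach is the same as the paper's: encode a $2$-coloring of $[r]\times[s]$ by the vector $\bv\in\Z^{2^r}$ of column-type multiplicities (so $\bv\cdot\1=s$), note that every admissible $\bv$ is realized by some coloring, and count the monochromatic boxes spanned by each unordered pair of columns via the entries of $M_r$. Up to and including your displayed identity
$$
\sum_{i<j} v_iv_j M_r(i,j) + \sum_i \binom{v_i}{2} M_r(i,i) \;=\; \frac{1}{2}\bigl(Q_r(\bv) - \bv\cdot\delta_r\bigr),
$$
the computation is correct --- and it agrees with the paper's own proof, which states that $Q_r(\bv)-\bv\cdot\delta_r$ counts \emph{twice} the number of monochromatic rectangles.

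The gap is your final step. The left-hand side of that identity \emph{is} the total number $N$ of monochromatic boxes (one term per unordered pair of columns), so you have proved $N=\tfrac12\bigl(Q_r(\bv)-\bv\cdot\delta_r\bigr)$; yet you then conclude that $N$ equals $Q_r(\bv)-\bv\cdot\delta_r$ ``exactly.'' No ``factor-of-two bookkeeping check'' can reconcile these two assertions: the factor of $2$ is genuine, coming from $\sum_{i\neq j}v_iv_jM_r(i,j)=2\sum_{i<j}v_iv_jM_r(i,j)$, while subtracting $\bv\cdot\delta_r$ only converts the diagonal $v_i^2$ terms into $2\binom{v_i}{2}$ --- it does not cancel the overall $2$. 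The correct conclusion from your computation is that the minimum number of monochromatic boxes over all colorings is $\tfrac12\min\bigl\{Q_r(\bv)-\bv\cdot\delta_r\bigr\}$, i.e., the proposition as literally stated is off by a factor of $2$. (This discrepancy exists in the paper itself: its proof says ``twice'' while its statement drops the factor.) Your write-up should carry the $\tfrac12$ through to the conclusion, or equivalently define $t$ as half the minimum, rather than assert the unhalved count; as written, the two halves of your proof contradict each other. Your argument for tightness --- the minimizing $\bv$ is realized by an actual coloring, so no larger $t$ works --- is fine once the count is stated consistently.
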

\begin{proof} Given a vector $\bv = (v_1,\ldots,v_r)$ satisfying the hypotheses, consider the $r \times s$ matrix $A$ with $v_j$ columns of type $f_j$ for each $j \in [r]$.  (We may identify $f_j$ with a column vector in $[2]^r$ in the natural way.)  It is easy to see that $Q_r(\bv)-\delta_r$ exactly counts twice the number of monochromatic rectangles in $A$, thought of as a $2$-coloring of the grid $[r] \times [s]$.
\end{proof}

We applied standard quadratic integer programming tools (XPress-MP) to minimize the appropriate programs.  Fortunately, for the cases considered, the matrix $M_r$ was positive semidefinite, meaning that the solver could use polynomial time convex programming techniques during the interior point search.  We conjecture that this is always the case.

\begin{conjecture} $M_r$ is positive semidefinite for $r \geq 3$.
\end{conjecture}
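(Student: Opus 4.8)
The plan is to prove the stronger statement that $M_r$ is positive semidefinite for \emph{every} $r\ge 1$ by exhibiting it directly as a Gram matrix. That is, I would produce an explicit feature map $f\mapsto w_f$ sending each coloring $\map{f}{[r]}{[2]}$ to a real vector so that
$$
M_r(i,j) = \ang{w_{f_i},w_{f_j}}
$$
for all $i,j$. Any matrix of inner products of a fixed list of vectors is automatically positive semidefinite, so this settles the conjecture (and more) in one stroke, with no eigenvalue estimates or appeal to positivity of principal minors.

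The key step is a counting identity that linearizes each binomial coefficient in the definition of $M_r$. Writing $S := f^{-1}(1)$ and $\bar S := f^{-1}(2) = [r]\setminus S$ for a coloring $f$, the $(i,j)$-entry is $\binom{|S_i\cap S_j|}{2} + \binom{|\bar S_i\cap \bar S_j|}{2}$. For any $A,B\subseteq[r]$ I would use
$$
\binom{|A\cap B|}{2} = \left|\left\{p\in\binom{[r]}{2} : p\subseteq A\cap B\right\}\right| = \sum_{p\in\binom{[r]}{2}} \1[p\subseteq A]\,\1[p\subseteq B],
$$
which holds because a two-element set $p$ is contained in $A\cap B$ exactly when it is contained in both $A$ and $B$, and the indicator of a conjunction is the product of the indicators. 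Now index coordinates by pairs $p\in\binom{[r]}{2}$ together with a color $b\in[2]$, and define $w_f\in\R^{2\binom{r}{2}}$ by $w_f(p,1) := \1[p\subseteq S]$ and $w_f(p,2) := \1[p\subseteq\bar S]$. Applying the identity once with $(A,B)=(S_i,S_j)$ and once with $(A,B)=(\bar S_i,\bar S_j)$ and adding gives $M_r(i,j) = \sum_{p,b} w_{f_i}(p,b)\,w_{f_j}(p,b) = \ang{w_{f_i},w_{f_j}}$. Collecting the $w_{f_j}$ as the columns of a matrix $W$ then yields $M_r = W^\ast W \succeq 0$.

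I do not expect a serious obstacle here: the only thing to check is the displayed counting identity, which is elementary, and the observation that both binomial terms decompose in the same fashion. It is worth remarking that the argument is entirely insensitive to the hypothesis $r\ge 3$; for $r\le 2$ the matrix $M_r$ is diagonal with entries in $\{0,1\}$ and is trivially positive semidefinite, so the conjecture in fact holds for all $r\ge 1$. I would present the Gram-matrix construction as the proof and record this strengthening as a remark.
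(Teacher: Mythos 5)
The paper does not prove this statement: it is recorded as a conjecture, supported only by the explicitly computed spectra for $3 \le r \le 9$, so there is no proof of record to compare yours against --- and yours is correct, which means it settles the conjecture. The counting identity is valid: a $2$-element set $p$ satisfies $p \subseteq A \cap B$ if and only if $p \subseteq A$ and $p \subseteq B$, so $\binom{|A \cap B|}{2} = \sum_{p \in \binom{[r]}{2}} \1[p \subseteq A]\,\1[p \subseteq B]$; applying this to $(S_i, S_j)$ and to $(\bar S_i, \bar S_j)$ and adding gives $M_r(i,j) = \ang{w_{f_i}, w_{f_j}}$, hence $M_r = W^\ast W$ is positive semidefinite. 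Your feature map also has a natural meaning in the paper's own terms: the coordinate $(p,b)$ of $w_f$ indicates that the row-pair $p$ is monocolored $b$ under the column-coloring $f$, so $W$ is exactly the incidence matrix between (row-pair, color) pairs and column types that underlies the rectangle count in the Proposition, and the resulting identity $Q_r(\bv) = \|W\bv\|^2$ makes transparent the convexity that the authors could only verify numerically. Two caveats are worth recording. First, your argument is insensitive to the finer structure the computations reveal: it bounds $\rank M_r \le 2\binom{r}{2} = r(r-1)$, consistent with but (for $r \ge 4$) weaker than the rank $r(r+1)/2$ implied by the conjectured spectrum, and it gives no information about the eigenvalues themselves, whose conjectured description for all $r \ge 4$ remains open. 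Second, your strengthening to all $r \ge 1$ is correct ($M_1$ is the zero matrix and $M_2 = \diag(1,0,0,1)$ in the natural ordering), so the restriction $r \ge 3$ in the statement is indeed unnecessary.
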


In particular, for $r=3$, the eigenvalues of $M_r$ are $0$, $1$, and $4$, with multiplicities $2$, $4$, and $2$, respectively.  For $4 \leq r \leq 9$, the eigenvalues are $0$, $2^{r-2}$, $2^{r-3}(r-2)$, $2^{r-2}(r-1)$, and $2^{r-4}(r^2-r+2)$, with multiplicities $2^r - r(r+1)/2$, $r(r-1)/2 - 1$, $r-1$, $1$, and $1$, respectively.  We conjecture that this description of the spectrum is valid for all $r \geq 4$.

\end{document}